\newtheorem{defi}{Definition}
\newtheorem{theorem}{Theorem}[section]
\newtheorem{lemma}[theorem]{Lemma}
\newtheorem{corollary}[theorem]{Corollary}
\newtheorem{remark}{Remark}
\newtheorem{example}{Example}
\newcommand{\La}{  { \Lambda }}
\newcommand{\1}{  {\tilde \omega_{\La^c} }}
\newcommand{\mm}{  { \mu_{\phi, \La, \1}  }}
\date{\displaydate{date}}
\begin{document}

\title{\LARGE \bf Random-cluster correlation inequalities for 
 Gibbs fields }

\author{ Alberto Gandolfi }

\affil{NYU Abu Dhabi}
\affil{Dipartimento di Matematica e Informatica U. Dini\\
 Universit\`a di Firenze}

\vspace{5mm}

\maketitle

\thispagestyle{plain}
\pagestyle{plain}

\begin{abstract}

\end{abstract}

In this note we prove a correlation inequality for local variables of a Gibbs field
 based on the connectivity
by active hyperbonds
in a random cluster representation of the non overlap configuration distribution
of two independent copies of the field.

As a consequence, we show that absence of Machta-Newman-Stein
blue bonds percolation implies uniqueness of Gibbs distribution 
in EA Spin Glasses. In dimension two this could constitute a step towards a
proof  that the critical temperature is zero.

%In this note we generalize the FK bound for
%the Ising model correlation by providing a  correlation  inequality for a binary Gibbs distribution
%based on a particular random cluster connectivity.
%Specifically, we show that correlations of any two local observables
%are bounded  by an active hyperbond 
%connectivity,  averaged over the suitably defined overlap
%distribution, 
%in a  random cluster representation of the non overlap
%configuration distribution between two independent copies
%of the Gibbs field.
%
%As a byproduct, we get a sufficient condition for the uniqueness of the Gibbs distribution
%which allows to reduce the long standing conjecture
%about the zero  temperature critical point
%in the two-dimensional Edwards Anderson Spin Glasses
%to the absence  of 
% Machta-Newman-Stein blue bonds percolation.

%\tableofcontents

\footnote{ 
AMS 2010 subject classifications. 60G60, 60J99, 82B44, 82D30.

Key words and phrases. Correlation inequalities, Gibbs fields, random cluster representation, 
disagreement percolation,
Spin Glasses}

\section{Introduction} 

The classic FK representation \cite{FK72} allows to express the spin spin correlation of 
Ising ferromagnets in terms of the FK occupied bonds connectivity
in the random cluster representation of the Ising model. No similar expressions or 
bounds 
are available for correlation of more general observables, or even
for spin correlations in
general Gibbs models.

In this paper, we provide a  general correlation  inequality of this type
by taking two copies of the Gibbs field. Specifically,
the correlation  of any two local observables
is bounded by  the active hyperbond 
(a generalization of the occupied bonds) connectivity 
in a (generalized) random cluster representation of the non overlap
configuration distribution between two independent copies
of a Gibbs field (called "foldings"  in \cite{BG13}),
the connectivity being  averaged over the overlap configuration distribution.
It is, to the knowledge of the author, the first result of this type with such a wide
spectrum of applicability.

The overlap and non-overlap configuration distributions in two independent copies 
are obtained, for 
a  general Gibbs measure, by fixing the value of the sum of the corresponding spins for each vertex: 
we declare an overlap if there is only one pair of spin values,
one spin from each copy, whose sum is
equal to the prescribed sum, and a non-overlap when there are more possibilities.
For the special case in which  the spins take only the values $\pm 1$, as in Ising model or
Spin Glasses, we have overlap when the two spins agree, and non-overlap when
they disagree, in accordance with standard terminology about overlap in Spin Glass theory  
\cite{MNS08}.

The use of  the sum (and difference) of spins in two independent copies
of an Ising model
 traces back to Percus and Lebowitz  \cite{L74}, and the non-overlap configuration distribution
 is  essential in \cite{R00}. A 
generalized random cluster representation (RCR) appears in
\cite{BG13}, and it is further generalized here. 
The use of product measure to prove inequalities appears also in 
\cite{G18}.

Some consequences of the main inequality are 
then drawn in the paper; they include a criterium for extremality of a Gibbs 
distribution, which allows to retrieve  
 the critical point for uniqueness of Markov Chains
 in a ferromagnetic Ising model on a binary tree; and a criterium for uniqueness
of the Gibbs distribution, which is then compared with Dobrushin criterium and 
disagreement percolation (see Section $4$).

Our results are  then related, in Section $4.5$, to the MNS representation of $\pm J$ spin glasses
 \cite{MNS08}; our final result is that absence of percolation of MNS blue bonds
 (in the non overlap region, and, hence, tout court absence of such percolation)
 for any boundary condition implies a.e. uniqueness of the Gibbs distribution.
 It is then conceivable that two dimensional geometric constraints prevent
 the formation of MNS blue bond percolation at any finite temperature, yielding a proof of the 
 long standing conjecture that phase transition occurs at zero temperature
 in the two dimensional EA model (see \cite{NS13}, Page 84, or \cite{TTC17}, Page 48).

\bigskip

The author would like to thank J. van den Berg, C. Newman and D. Stein for very valuable discussions. 

\section{Preliminaries}

\subsection{Overlap and non overlap configuration distributions} 

We consider an infinite graph  $\mathcal G = (V, E)$, and a locally finite 
family of hyperbonds 
$\mathcal B=\{b \subset V, b \text{ finite}\}$; $\La$  indicates
 finite subsets of $V$, and $\delta \La$ is the set of vertices $v \in V$ such 
that there is a $b \in  \mathcal B$ with $v \in b, b \cap \La \neq \emptyset,
b \cap \La^c \neq \emptyset$.

We then take
$\Omega= F^V$, with $F$ a finite alphabet; and for any  $\Lambda
\subset V$, $\Omega_{\Lambda} = F^ \Lambda$.
Combined configurations
are denoted by juxtaposing symbols, so for disjont $C_1, C_2$,
 $\omega_{C_1} \omega_{C_2} $ is the configuration of $\Omega_{C_1 \cup C_2}$
 obtained from $\omega_{C_1}$ and $ \omega_{C_2} $.

We then consider an interaction $\phi$ defined on $\cup_{b \in \mathcal B} \Omega_b$; 
to include possible 
constraints we allow $\phi = \infty$; although it would be more expressive to use a different 
collection of hyperbonds from $\mathcal B$ for possible constraints (see \cite{GL16})),
such a distinction is not needed for our purposes here.

The $\mathcal B${\bf-Gibbs measure} on $\La$ with interactions $\phi$ 
and boundary conditions $\1$ is
$$
\mu_{\phi, \La, \1} (\omega_{\La})=
\frac{1}{Z} e^{\sum_{b \in \La} \phi(\omega_b) + 
\sum_{b: b \cap \La \neq \emptyset, b \cap \La^c \neq \emptyset}
\phi(\omega_{b \cap \La} \tilde \omega_{b \cap \La^c})}.
$$
Thermodynamic limits  in $\La$ of $
\mu_{\phi, \La, \1}$ are denoted by
$\mu_{\phi}= \mu_{\phi, \tilde \omega} $, where $\tilde \omega$
denotes a sequence of boundary conditions.

\bigskip
We next consider  two copies $\La^{(1)}, \La^{(2)}$ of $\La$,
and the product  space $\Omega'=
\Omega_{\La^{(1)} } \times \Omega_{\La^{(2)} }$ with the
product measure $\mu_{\phi, \La, \1}^{(1)} \times \mu_{\phi, \La, \1}^{(2)}$,
with $\mu_{\phi, \La, \1}^{(\ell)} =^d \mu_{\phi, \La, \1}$, i.e. we consider
two independent copies of the model.

We are  interested in conditioning to the values of local functions
with some invertibility. There are many possible choices,
and for simplicity we restrict to the sum of spins, i.e. to
$\sigma_i=\omega^{(1)}_i + \omega^{(2)}_i$,
where $\omega^{(\ell)} \in \Omega^{(\ell)}$. 
Then, $\sigma=\{\sigma_i \}_{i \in \La} \in \Sigma= \prod_{i \in \La} \tilde F$
with $\tilde F=\{a: a= a_1+a_2, a_{\ell} \in F\}$.
Given $\sigma \in \Sigma$ and $\omega \in \Omega$,
we denote by $\sigma - \omega$ the configuration
such that $(\sigma - \omega)_i=\sigma_i - \omega_i$. Given
$\sigma$, a probability
$\mu$ on $\Omega$ is called {\bf $\sigma$-symmetric}
if $\mu(\omega)=\mu(\sigma - \omega)$ for all $\omega$'s.

Given $\sigma$, let $W_{ \sigma}=\{ (\omega^{(1)}, \omega^{(2)}):
\omega^{(1)}_i + \omega^{(2)}_i = \sigma_i \text{ for all } i \in \La\}$.
The collection 
$
\{W_{ \sigma}\}_{ \sigma \in \Sigma}
$
forms a partition of $\Omega'$, and we define
the  {\bf  overlap configuration distribution}
$$
 \rho_{\phi, \La, \1}(\sigma)
 =(\mu_{\phi, \La, \1}^{(1)} \times \mu_{\phi, \La, \1}^{(2)}) 
 (W_{ \sigma}),
 $$
and 
the {\bf non overlap configuration distribution} 
$$
 \mu^{ \sigma}_{\phi, \La, \1}(\omega)
 =(\mu_{\phi, \La, \1}^{(1)} \times \mu_{\phi, \La, \1}^{(2)}) 
 ((\omega,\sigma-\omega) | \sigma).
 $$
 By definition, for an event $A \subseteq \Omega$, 
\begin{eqnarray}
\mu_{\phi, \La, \1}(A)&=&
(\mu_{\phi, \La, \1}^{(1)} \times \mu_{\phi, \La, \1}^{(2)})(A \times \Omega)
\nonumber \\
&=&\sum_{\sigma \in \Sigma} \mu^{\sigma}_{\phi, \La, \1}(A) 
\rho_{\phi. \La, \1} (\sigma).
\end{eqnarray}

In the special case that $\Omega=\{-1,1\}^{\Lambda}$, $\sigma_i \in \{-2,0,2\}$;
moreover, conditioned on $\sigma_i$, $\sigma_i-\omega_i $ equals $ -\omega_i$
if $\sigma_i=0$, and $+\omega_i$ otherwise. The overlap region is where
$\sigma_i \neq 0$, as then only the pair $(\omega_i^{(1)}, \omega_i^{(2)})$
is admissible with $\omega_i^{(j)}= \sigma_i/2$ for both $j$'s;
and the nonoverlap region is where $\sigma_i = 0$, in which case two pairs are admissible.
This example
 justifies the reference to overlap in the names given to the two  distributions above.

 Given $\sigma$, let $\Omega(\sigma) = \prod_{i \in \Lambda}
 F(\sigma_i) \subseteq \Omega$, where $F(\sigma_i)=\{a \in F: \exists b \in F \text{ with }
 a+b=\sigma_i\}$. The region $K(\sigma)=\{i:|F(\sigma_i)|=1\}$ is the the  overlap region,
 and the non overlap distribution is, in fact, a distribution on 
 $K^c(\sigma)$:
in \cite{BG13} such distribution is called a "folding" of $\mu$, and foldings
form a collection of distributions indexed by the overlap configuration $\sigma$.
If the initial distribution is Gibbs, then the non overlap distribution is  
simmetrized  Gibbs:
  \begin{lemma} \label{2.100}
 Given a $\mathcal B( \La)$-Gibbs distribution 
 $\mu_{\phi, \La, \1}$ on $\Omega$ with interactions $\phi$,
 and given $\sigma \in \Sigma$, the non overlap configuration distribution
 $\mu^{\sigma}_{\phi, \La, \1}$ is a $\sigma$-symmetric
 $\mathcal B( \La)$-Gibbs distribution on $\Omega(\sigma)$
 with interactions
 $
 \phi'(\omega_b)= \phi(\omega_b)+\phi(\sigma_b-\omega_b)$
 and boundary conditions $\1$.

\end{lemma}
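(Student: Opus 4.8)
The plan is to compute the conditional distribution $\mu^{\sigma}_{\phi,\La,\1}(\omega)$ directly from its definition and recognize the result as a Gibbs distribution with the claimed modified interaction. First I would write out, for a pair $(\omega^{(1)},\omega^{(2)})\in W_{\sigma}$, the product weight
\[
(\mu^{(1)}_{\phi,\La,\1}\times\mu^{(2)}_{\phi,\La,\1})\big((\omega^{(1)},\omega^{(2)})\big)
=\frac{1}{Z^2}\,e^{\,H_{\La,\1}(\omega^{(1)})+H_{\La,\1}(\omega^{(2)})},
\]
where $H_{\La,\1}(\cdot)$ is the Hamiltonian in the exponent of $\mu_{\phi,\La,\1}$, i.e.\ the sum of $\phi(\omega_b)$ over interior hyperbonds $b\subset\La$ plus the sum of $\phi(\omega_{b\cap\La}\tilde\omega_{b\cap\La^c})$ over boundary hyperbonds. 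On $W_{\sigma}$ we have the bijection $\omega^{(2)}=\sigma-\omega^{(1)}$, so setting $\omega:=\omega^{(1)}$ the weight becomes $\frac{1}{Z^2}e^{\,H_{\La,\1}(\omega)+H_{\La,\1}(\sigma-\omega)}$. Grouping the two Hamiltonians hyperbond by hyperbond, the interior part becomes $\sum_{b\subset\La}\big(\phi(\omega_b)+\phi(\sigma_b-\omega_b)\big)=\sum_{b\subset\La}\phi'(\omega_b)$, which is exactly the interior Hamiltonian for the interaction $\phi'$. For the boundary hyperbonds one must note that $\tilde\omega_{\La^c}$ is the \emph{same} boundary configuration for both copies, so the boundary term is $\sum_b\big(\phi(\omega_{b\cap\La}\tilde\omega_{b\cap\La^c})+\phi(\sigma_{b\cap\La}-\omega_{b\cap\La}\,|\,\tilde\omega_{b\cap\La^c})\big)$; to make this the boundary Hamiltonian of $\phi'$ with boundary condition $\1$ one needs the second $\phi$ to be evaluated at $\sigma'_{b}-(\omega_{b\cap\La}\tilde\omega_{b\cap\La^c})$ for an appropriate extension $\sigma'$ of $\sigma$ to $\La^c$, i.e.\ $\sigma'_v=2\tilde\omega_v$ for $v\in\La^c$. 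With that reading, $\phi(\omega_{b\cap\La}\tilde\omega_{b\cap\La^c})+\phi(\sigma'_b-\omega_{b\cap\La}\tilde\omega_{b\cap\La^c})=\phi'(\omega_{b\cap\La}\tilde\omega_{b\cap\La^c})$ and the whole exponent collapses to the Hamiltonian $H'_{\La,\1}(\omega)$ associated to $\phi'$.

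Next I would handle the normalization and the state space. Dividing by $\rho_{\phi,\La,\1}(\sigma)=(\mu^{(1)}\times\mu^{(2)})(W_\sigma)$ replaces the $1/Z^2$ prefactor by the correct normalizing constant, so $\mu^{\sigma}_{\phi,\La,\1}(\omega)=\frac{1}{Z'}e^{H'_{\La,\1}(\omega)}\mathbf 1_{\omega\in\Omega(\sigma)}$. The indicator appears because $\omega^{(2)}=\sigma-\omega$ must land in $F$ at every site, which is precisely the constraint $\omega_i\in F(\sigma_i)$ for all $i$, i.e.\ $\omega\in\Omega(\sigma)$; equivalently one can absorb it into $\phi'$ by declaring $\phi'=\infty$ whenever $\sigma_b-\omega_b\notin\Omega_b$, consistent with the convention allowing $\phi=\infty$. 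This is exactly the definition of a $\mathcal B(\La)$-Gibbs distribution on $\Omega(\sigma)$ with interaction $\phi'$ and boundary condition $\1$.

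Finally, $\sigma$-symmetry is immediate from the formula: the map $\omega\mapsto\sigma-\omega$ fixes $\Omega(\sigma)$ and, since $\phi'(\omega_b)=\phi(\omega_b)+\phi(\sigma_b-\omega_b)=\phi'(\sigma_b-\omega_b)$, it leaves $H'_{\La,\1}$ invariant (again using that $\sigma'$ restricted to $\La^c$ satisfies $\sigma'_v-\tilde\omega_v=\tilde\omega_v$, so the boundary term is symmetric too), hence $\mu^{\sigma}_{\phi,\La,\1}(\omega)=\mu^{\sigma}_{\phi,\La,\1}(\sigma-\omega)$. The only genuinely delicate point — and the one I would write out with care — is the bookkeeping of the boundary hyperbonds: one must verify that using a single boundary configuration $\tilde\omega_{\La^c}$ for both copies is compatible with viewing the non-overlap distribution as a $\phi'$-Gibbs measure with an overlap value $\sigma'_v=2\tilde\omega_v$ forced on $\La^c$, so that the symmetry $\omega\mapsto\sigma-\omega$ does not disturb the boundary term. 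Everything else is a direct and routine rearrangement of exponentials.
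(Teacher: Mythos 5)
Your proposal is correct and takes essentially the same route as the paper's own proof: condition on $W_\sigma$, use the bijection $\omega^{(2)}=\sigma-\omega^{(1)}$, regroup the exponent hyperbond by hyperbond into $\phi'(\omega_b)=\phi(\omega_b)+\phi(\sigma_b-\omega_b)$ (including the boundary hyperbonds), and observe that the measure vanishes off $\Omega(\sigma)$ while the symmetry $\omega\mapsto\sigma-\omega$ leaves the exponent invariant. Your explicit bookkeeping for the boundary terms, extending $\sigma$ to $\La^c$ by $2\tilde\omega_v$ so that the same boundary condition $\1$ serves both copies, merely spells out what the paper leaves implicit in its notation $\phi(\sigma_b-\omega_{b\cap\La}\tilde\omega_{b\cap\La^c})$.
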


\begin{proof} Given $\sigma$, for $\omega \in \Omega(\sigma)$
we have
\begin{eqnarray} \label{3}
  \mu^{ \sigma}_{\phi, \La, \1}(\omega)
&=&(\mu_{\phi, \La, \1}^{(1)} \times \mu_{\phi, \La, \1}^{(2)}) 
 ((\omega,\sigma-\omega) | \sigma)\nonumber \\
 &=&
 \frac{1}{Z'} \exp(\sum_{b \in \La} ( \phi(\omega_b) + \phi(\sigma_b-\omega_b) ) \nonumber \\
 && \hskip 0.5 cm
+\sum_{b: b \cap \La \neq \emptyset, b \cap \La^c \neq \emptyset}
 \phi(\omega_{b \cap \La} \tilde \omega_{b \cap \La^c})
+\phi(\sigma_{b } -\omega_{b \cap \La}  \tilde \omega_{b \cap \La^c} )\nonumber \\
 &=&
 \frac{1}{Z'} \exp(\sum_{b \in \La}  \phi'(\omega_b)  \nonumber \\
 && \hskip 0.5 cm
+\sum_{b: b \cap \La \neq \emptyset, b \cap \La^c \neq \emptyset}
 \phi'(\omega_{b \cap \La} \tilde \omega_{b \cap \La^c}
 )   
\nonumber  \\
 &=&  \mu^{ \sigma}_{\phi, \La, \1}(\sigma-\omega ).
\nonumber
\end{eqnarray}
 Clearly, $ (\mu_{\phi, \La, \1}^{(1)} \times \mu_{\phi, \La, \1}^{(2)}) 
 ((\omega,\sigma-\omega))=0$ for $\omega \notin \Omega(\sigma)$.

\end{proof}

\subsection{Random cluster representations RCR's}

%
%
%For clearness, we list now the various probabilities
%we deal with in this work, together with their counterpart in 
%the 
% standard FK representation of the ferromagnetic Ising model. 
%\bigskip
%
%\begin{tabular}{|  c  | c | c | c | }
%
%  \hline                        
% Name & Symbol& Role & FK case \\
%  \hline 
%  \multirow{2}{6em}{Initial probability } & $ \mu$  & Probability  &  Ising model $\mu_{J,0}$\\ 
%&  &  to be represented&\\
%  \hline 
% \multirow{2}{6em}{Base probability} & $\nu$  & Starting measure & Bernoulli\\ 
%&  &  of the representation&with parameter $p$ \\
%  \hline 
%   \multirow{3}{6em}{Joint distribution } &  & Joint distribution&Joint \\ 
%& $Q$  &  of spin and  & spin and FK bond \\
%&  &   (hyper)bond variables & distribution \\
%   \hline 
%   \multirow{3}{8em}{Random Cluster Probability} & & Marginal on & FK measure $\phi_{p,2}$\\ 
% & $P$   &  hyperbond &with parameter $p$ \\
%&  &  variables&and  $q=2$ \\
%  \hline  
%   \multirow{3}{8em}{Overlap configuration distribution } &   & Distribution  & Developed
%   \\ 
%& $ \rho $ &  on spin sums& in this paper \\
%&& $ \sigma \in \Sigma$ &\\
%  \hline 
%    \multirow{3}{8em}{Non-overlap configuration distribution } &   & Conditional distribution  & 
%   \\ 
%& $ \mu^{\sigma} $ &  given $\sigma$& Folding\\
%&&   &\\
%  \hline 
%   \multirow{3}{8em}{Integrated Random Cluster Probability } &   &   & Developed
%   \\ 
%& $ \overline P$ & $\rho$-average  of $P$  &in this paper\\
%&&  &\\
%  \hline 
%\end{tabular}
%
%\bigskip
%Here are the detailed definitions. 

\medskip

A  {\bf $\cal B$-Random Cluster Representation} or {\bf $\cal B$-RCR} of a probability $\mu$ on 
$\Omega_\La$ is a way of expressing $\mu$ 
using a {\bf base probability} $\nu$ on 
the configurations  $\eta \in H =
 \prod_{b \in \mathcal B} \mathcal P(\Omega_b)$,
 where $\mathcal P(\Omega_b)$ indicates the collection of all subsets of $\Omega_b$,
  as follows:
 for every $\omega \in  \Omega_\La$
\begin{eqnarray} \label{2.31}
\mu(\omega) = \frac{1}{Z_1} \sum_{\eta \in H} \nu(\eta) \prod_{ b \in \mathcal B}
\mathbb I_{\omega_b \in \eta_b} =  \frac{1}{Z_1} \sum_{\eta \in H, \eta \sim \omega} \nu(\eta) ,
\end{eqnarray}
where $Z_1$ is a normalizing factor. 
Configurations in $H$, namely prescriptions of collections of local 
spin configurations, are called {\bf hyperbond variables}, or simply 
bond variables when only pairs of spins are involved.

Note that the $\cal B$-RCR is not 
unique. Note also that \eqref{2.31} can be used to define new probabilities $\mu$
once $H$ and $\nu $ are given. 

\bigskip

If $\mu=\mu_{\phi, \La, \1}$ is $\cal B$-Gibbs, with possible hard core constraints,
 then the following procedure produces a variety of $\cal B$-RCR's
in which $\nu= \prod_{b \in \cal B} \nu_b$ is Bernoulli. For each $b $, consider 
the "energy" levels $\{e^{\phi(\omega_b)}\}_{\omega_b \in \Omega_b}=
\{e^{\phi_1}, \dots, e^{\phi_k}\}$, $\phi_1>\dots >\phi_k$, and 
a collection of $k$ subsets $\eta^{(1)}_b, \dots, \eta^{(k)}_b$ of $\Omega_b$
which, for simplicity, do not split energy levels, i.e. $\omega_b \in  \eta^{(i)}_b $ and $
\phi(\omega_b)=\phi(\omega_b')$ imply $\omega_b' \in  \eta^{(i)}_b $
(notice that for mathematical convenience there is a plus sign in 
the exponent).
Then let $A = [a_{i,j}]$ be a $k\times k$ $0$-$1$  matrix with $a_{i,j}$
the indicator that $ \eta^{(j)}_b $ contains all the configurations with energy level
$\phi_i$. If, for all $b$'s, the problem 
\begin{eqnarray} \label{2.1} 
A \vec p = c_b [e^{\phi_i}] \text{ for some  } c_b \in \mathbb R, 
\quad \sum_{j=1}^k p_j=1, p_j \geq0, 
\end{eqnarray}
where $ \vec p$ is the vector with components $p_j's$,
can be solved, then we can 
take  $\nu_b (\eta^{(j)}_b):= p_j$
and $\nu$ is the base of a $\cal B$-RCR of $\mu_{\phi, \La, \1}$.

One particular case of the above mechanism has been described in \cite{BG13},
in which $ \eta^{(i)}_b $ are taken to be monotone, i.e. 
$\omega_b \in  \eta^{(i)}_b $ iff $\phi(\omega_b) \geq \phi_i$. 
In that case, $A$ is $0$-$1$ upper triangular; as the vector $ [e^{\phi_i}]$ is 
monotone decreasing,  there is a nonnegative solution to the above problem,
which can be normalized to give $\nu_b (\eta^{(i)}_b)
= \frac{e^{\phi_i}-e^{\phi_{i+1}} }{ e^{\phi_1}}$, with $\phi_{k+1}=-\infty$.

\subsection{Typed RCR and MNS blue-red bonds} \label{S2.3}

To achieve some additional expressive power, a typed RCR can also be introduced,
in which different types of hyperbond configurations are used. To keep things
simple, we discuss the case of two types only. A two-typed $\cal B$-RCR of a probability $\mu$
is given  by a pair of probabilities $\nu^{(a)}$ on $H^{(\alpha)}= H$ and $\nu^{(\beta)}$ on $H^{(b)}=H$
such that for every $\omega \in  \Omega_\La$
\begin{eqnarray} \label{2.3}
\mu(\omega) = \frac{1}{Z_2} \sum_{\eta \in H} \nu^{(\alpha)}(\eta)  \nu^{(\beta)}(\eta) \prod_{ b \in \mathcal B}
\mathbb I_{\omega_b \in \eta^{(\alpha)}_b} \mathbb I_{\omega_b \in \eta^{(\beta)}_b} ,
\end{eqnarray}
where $Z_2$ is a normalizing factor; the above definition introduces some novelties when
constraints are imposed on the possible values of $\eta^{(\alpha)}$ and $\eta^{(\beta)}$. The  constrained linear problem \eqref{2.1} becomes then
\begin{eqnarray} \label{2.4}
[A^{(\alpha)} \vec p^{(\alpha)}]_k [A^{(\beta)} \vec p^{(\beta)}]_k = c_b [e^{- \phi_k}]
\text{ for some constant }c_b,  \nonumber\\
\sum_{j=1}^k p^{(\alpha)}_j=\sum_{j=1}^k p^{(\beta)}_j=1; \quad  p^{(\alpha)}_j,p^{(\beta)}_j \geq0, 
\end{eqnarray}
with possibly additional  constraints on the entries of $p^{(1)}$ and $p^{(2)}$.

\bigskip 

An example of typed RCR appears in \cite{MNS08}, with a different terminology,  for the quenched distribution
of two independent copies of EA Spin Glasses. In each copy, $V=\mathbb Z^d$,
$\mathcal B(V)$ consists of n.n. pairs of vertices,
 $\{-1,1\}^V$  and 
\begin{eqnarray} \label{2.31}
\mu_{{\bf J}, \La, \1}(\omega_\La)= \frac{1}{Z} e^{\sum_ {\{ i,j \}} J_{i,j} \omega_i \omega_j
+ \sum_ {\{ i,j \}, i \in \La, j \not \in  \La} J_{i,j}\omega_i \tilde\omega_j} ,
\end{eqnarray}
where $J_{i,j}$ are i.i.d. symmetric r.v.'s taking values in $\pm J$, for 
some fixed $J>0$. The model is quenched, in the sense that a fixed value of 
${\bf J} = \{J_{i,j} \}_{\{i,j\} \in \mathcal B(V)}$ is taken, and later averaged on the
$\bf J$'s.
We consider then two copies of the space: $\La= \La^{(1)} \times \La^{(2)}$, two identical copies; $\omega_\La= \omega_{\La^{(1)}}
\times \omega_{\La^{(2)}}$, the product of any pair of not necessarily identical configurations; $b =b^{(1)} \times b^{(2)}= \{i,j\} \times \{i,j\}$, two copies of the same bond; and
$$
\phi(\omega_b) = J_{i,j} (\omega^{(1)}_i \cdot \omega^{(1)}_j + \omega^{(2)}_i \cdot \omega^{(2)}_j),
$$
where  we have indicated by a dot 
the actual products of the values of the two spin configurations.

For the product space above, one can produce a 
 one typed  RCR from energy levels  as follows.
The energy levels are $\phi_1=2 J_{i,j}, \phi_2=0, \phi_3=-2 J_{i,j}$; if 
$\Omega_{\phi_i} =\{\omega_b : \phi(\omega_b) = \phi_i\}$,
then $\eta_b$ can take one of the values $\Omega_{\phi_1},
\Omega_{\phi_1} \cup \Omega_{\phi_2}$ or $ \Omega_b$; and
$$
A= \begin{bmatrix}
1&1&1 \\
0&1&1\\
0&0&1
 \end{bmatrix} , \quad \quad \nu_b(\eta_b)= \begin{cases} 
1-e^{-2 J_{i,j}} \quad \text{ if  } \eta_b=\Omega_{\phi_1} \\
e^{-2 J_{i,j}}- e^{-4 J_{i,j}} \quad \text{ if  } \eta_b= \Omega_{\phi_1} \cup \Omega_{\phi_2}\\
e^{-4 J_{i,j}} \quad \text{ if  } \eta_b= \Omega_b .
 \end{cases} 
 $$

 On the other hand, a two typed RCR can be obtained constraining $\nu^{(\alpha)}$
 to single out only energy levels corresponding to
  two configurations $\omega_{\La^{(1)}}$
 and $\omega_{\La^{(2)}}$ which agree with the coupling in both copies,
 i.e. such that
 $J_{i,j} \omega^{(1)}_i \omega^{(1)}_j=J_{i,j}  \omega^{(2)}_i \omega^{(2)}_j=1$;
 and $\nu^{(\beta)}$ to single out only the energy level corresponding to agreement with the
 coupling in exactly one of the two copies, i.e. such that
  i.e. $\omega^{(1)}_i \omega^{(2)}_i  \omega^{(1)}_j \omega^{(2)}_j=-1$.
 In this case, $
A^{(1)}= \begin{bmatrix}
1&1 \\
0&1\\
0&1
 \end{bmatrix} $ and $
A^{(2)}= \begin{bmatrix}
0&1 \\
1&1\\
0&1
 \end{bmatrix} $;  and \eqref{2.4} becomes
 $$
 \begin{bmatrix}
(p_1^{(\alpha)}(i,j)+  p_2^{(\alpha)}(i,j) )p_2^{(\beta)}(i,j)  \\
p_2^{(\alpha)}(i,j)(p_1^{(\beta)}(i,j)+  p_2^{(\beta)} (i,j))\\
p_2^{(\alpha)} (i,j) p_2^{(\beta)}(i,j)
 \end{bmatrix} 
= c  \begin{bmatrix}
e^{2 J_{i,j}}  \\
0 \\
e^{-2 J_{i,j}}
 \end{bmatrix} 
 $$
The only solution
 is $p_1^{(\alpha)}(i,j)=1-e^{-4 J_{i,j}}, p_1^{(\beta)}(i,j)= 1-e^{-2 J_{i,j}}$, as
indicated in \cite{MNS08}; the two variables are called there blue and
red bonds, respectively, each being present with probability $p_1^{(\alpha)}$
and
$p_1^{(\beta)}$, respectively.
This is a two-typed RCR of the $\mathcal B(\Lambda)$-Gibbs distribution of 
two independent quenched EA Spin Glass configurations as
for $\mathbb I^{(\alpha)}=\mathbb I_{ J_{i,j} \omega^{(1)}_i \omega^{(1)}_j=1} \mathbb I_{J_{i,j}  \omega^{(2)}_i \omega^{(2)}_j=1}$
and $\mathbb I^{(\beta)}=\mathbb I_{  \omega^{(1)}_i \omega^{(1)}_j \omega^{(2)}_i \omega^{(2)}_j=-1}$
we have, with no boundary conditions,
\begin{eqnarray*}
&& \sum_{(\eta^{(\alpha)}, \eta^{(\beta)}) \sim (\omega^{(1)},\omega^{(2)})}
\nu^{(\alpha)}(\eta^{(\alpha)}) \nu^{(\beta)}(\eta^{(\beta)}) \\
&& \quad \quad \quad =
\frac{1}{Z} \prod_{\{i,j\} \in \mathcal B(\Lambda)} p_2^{(\alpha)}(i,j)
(1-  \mathbb I^{(\alpha)})
p_2^{(\beta)}(i,j)(1- \mathbb I^{(\beta)}  )\\
&& \quad \quad \quad =
\frac{1}{Z}e^{ -4 \sum_{\{i,j\}: \mathbb I^{(\alpha)}=1} J_{i,j}
-2 \sum_{\{i,j\}: \mathbb I^{(\beta)}=1} J_{i,j} }
\\
&& \quad \quad \quad 
= \frac{1}{Z} e^{\sum_ {\{i,j\} \in \mathcal B(\Lambda)} J_{i,j} \omega^{(1)}_i \omega^{(1)}_j+
J_{i,j} \omega^{(2)}_i \omega^{(2)}_j }\\
&& \quad \quad \quad 
=(\mu_{{\bf J} , \La}\times \mu_{{\bf J}, \La})(\omega^{(1)}_\La,\omega^{(1)}_\La)
\end{eqnarray*}
where the equality before the last one is obtained by factoring out $\prod_{\{i,j\} \in \mathcal B(\Lambda)}
2 J_{i,j}$. Boundary conditions can easily be incorporated.

The bond variables $\eta^{(\alpha)}$ are called blue bonds, and the $\eta^{(\beta)}$
red bonds, in the MNS representation.

\bigskip

\subsection{Active hyperbond connectivity}

In a RCR, the {\bf joint distribution on spin and hyperbond variables} is 
denoted by 
\begin{eqnarray} \label{2.32}
Q_{\phi, \La, \1}(\omega, \eta)= \frac{1}{Z_1}  \nu(\eta) \prod_{ b \in \mathcal B}
\mathbb I_{\omega_b \in \eta_b} =  \frac{1}{Z_1}  \nu(\eta) \mathbb I_{\omega \sim \eta}.
\end{eqnarray}
Then, 
the {\bf Random Cluster Probability $P=P_{\phi, \La, \1}$} is 
the marginal on the hyperbond variables:
$P_{\phi, \La, \1}(\eta)= \sum_{\omega \in \Omega} Q_{\phi, \La, \1}(\omega, \eta).$
Notice that random cluster probability is absolutely continuous w.r.t. $\nu$,
with a Radon Nikodym derivative  computable, in principle, in terms of the geometrical features 
which can described in terms of $\eta$ (this is where the factor $2^{\text{ number of clusters }}$ 
 appears in the original FK distribution).

The most relevant feature of the hyperbond configuration $\eta_b$ at $b$ is
whether it puts some restrictions on the compatible configurations $\omega_b$ or
not.
Given $\eta \in H$, the hyperbonds $b$ for which $\eta_b \neq \Omega_b$
are called  {\bf active}. We say that two hyperbonds $b(1), b(2)$ are directly connected
if $b(1) \cap b(2) \neq \emptyset$; and that two sets of vertices $\La_1, \La_2$ are connected
by active hyperbonds if there is a chain of sequentially directly connected
active hyperbonds, two of which have non empty intersections with  $M_1, M_2$.
We indicate this event by $\La_A 
\xleftrightarrow {act} \La_B$. Connected active hyperbonds form clusters,
which are at the origin of the name of "random cluster" representation.

In the original $FK$ representation of the ferromagnetic Ising model,
it is in fact the connectivity by active (or "occupied" in the original formulation) bonds
which is equivalent to the
 spin-spin 
correlations. More precisely, in the
ferromagnetic Ising model with no external field, 
\begin{eqnarray}\label{2.4.1}
\langle \omega_i \omega_j \rangle -  \langle \omega_i \rangle \langle\omega_j \rangle= P(
i \xleftrightarrow {act} j ).
\end{eqnarray}

One could hope to have a similar results, or at least an upper bound
of correlation in terms of connectivity, in greater generality. Unfortunately, 
for other Gibbs distributions (even for those which admit a directly extended
version of the FK representation) or 
for events $A, B$ which depend on more than one spin, the analogous
bound for covariances is not valid in general. Below, we make some explicit
calculation on
a very simple example: a nearest neightbor (n.n.) two body interaction model on three
aligned binary spins; in the example, couplings favor minus spins 
on the left, and plus spins on the right, both couplings involving
the middle spin; therefore, a natural RCR has no compatible active
bonds, and active bonds connectivity is zero; on the other hand, 
covariances between extreme spins are still nonzero.

\begin{example} \label{Ex1}
Take $\La=\{1,2,3\}$, 
$\Omega=\{-1,1\}^{\La}$, a two body interaction Gibbs distribution $\mu$
with interaction $\phi$ defined by
$\phi(\omega_{\{1,2\}} )= J_{12} \mathbb I_{\omega_1=\omega_2=-1}$
and $\phi(\omega_{\{2,3\}} )= J_{23} \mathbb I_{\omega_2=\omega_3=1}$.
We have
\begin{eqnarray}
\Delta \mu= \mu(\omega_1=\omega_3=1)-\mu(\omega_1=1)\mu(\omega_3=1)
&=&
\frac{(1-e^{J_{12}})(1-e^{J_{23}})}{(2(2+e^{J_{12}}+e^{J_{23}}))^2}\\
&=& \frac{(1-e^{J_{12}})(1-e^{J_{23}})}{Z^2} >0 \nonumber
\end{eqnarray}
and $Cov(\omega_1, \omega_3)= 4 \Delta \mu$.

A RCR representation has base $\nu = \nu_{12} \times \nu_{23}$,
with $\nu_{ij}$ concentrated on $\{\Omega_{\{i,j\}}, \Omega_{\{i,j\}}^*\}$
where $\Omega_{\{1,2\}}^*=\{\omega_{\{1,2\}}:\omega_1=\omega_2=-1\}$
 and $\Omega_{\{2,3\}}^*=\{\omega_{\{2,3\}}:\omega_2=\omega_3=1\}$;
 moreover, $\nu_{ij}( \Omega_{\{i,j\}}^*)=1-e^{-J_{ij}}$. In fact,
 for $\omega \in \Omega$
\begin{eqnarray*}
\sum_{\eta: \eta \sim \omega} \frac{\nu(\eta)}{\sum_{\omega', \eta'}
\nu(\eta') \mathbb I_{\eta' \sim \omega'}}
&=& \frac{e^{-J_{12} \mathbb I_{(\omega_1=\omega_2=-1)^c} 
- J_{23} \mathbb I_{(\omega_2=\omega_3=1})^c }}{Z_{\nu}} \\
&=&
\frac{e^{J_{12} \mathbb I_{(\omega_1=\omega_2=-1)} 
+ J_{23} \mathbb I_{(\omega_2=\omega_3=1)} }}{Z} = \mu(\omega).
\end{eqnarray*}
On the other hand 
\begin{eqnarray*}
P(1 
\xleftrightarrow {act} 3) &=& 
\sum_{\eta: 1 \xleftrightarrow {act} 3 \text{ in } \eta}
\nu(\eta) n_{\eta} \\
&=& \nu(\Omega_{\{1,2\}}^*,\Omega_{\{2,3\}}^*) n_{\eta} =0
\end{eqnarray*}
as $n_{\eta}=|\{\omega \in \Omega: \omega \sim \eta\}| = 0$
since 
$\Omega_{\{1,2\}}^* \cap \Omega_{\{2,3\}}^*
= \emptyset$.
So, $|Cov(\omega_1, \omega_3)| >|\Delta \mu| >0= P(1 
\xleftrightarrow {act} 3)$ and there is no upper bound of 
correlations in terms of connectivity.

\medskip

\end{example}

Clearly, there could be other RCR's of the same model for which a bound holds,
but the example shows that this does happen in general; in particular,
the example shows also that
lack of correlation, and even more  independence, does not follow
from lack of active (hyper)bond connectivity. This is an issue in the theory
of Spin Glasses, for instance, in which lack of FK bond connectivity 
does not imply uniqueness of Gibbs phase as it does in its ferromagnetic
counterpart (see \cite{N94}).

\subsection{RCR of non overlap configuration distribution and Integrated
Random Cluster distribution of active hyperbonds} 

For these reasons, we resort to 
the non overlap configuration distributions $\mu_{\phi. \La, \1}^{\sigma}$,
 and to their own RCR's. 
As $\mu_{\phi. \La, \1}^{\sigma}$ is 
$\mathcal B(\La)$-Gibbs distribution, the 
non overlap configuration distributions $\mu_{\phi. \La, \1}^{\sigma}$
is  $\mathcal B(\La)$-Gibbs on $\Omega(\sigma)$
 by Lemma \ref{2.100}; therefore, the methods shown above
produce RCR's  for each $\mu_{\phi. \La, \1}^{\sigma}$. We obtain
a collection of RCR's basis $\nu_{\phi. \La, \1}^{\sigma}$, and their
related marginals $P_{\phi. \La, \1}^{\sigma}$ over hyperbond variables
$\eta \in H^{\sigma}=\prod_{b \in \mathcal B} \Omega_b(\sigma)$,
where $ \Omega_b(\sigma)=\prod_{i \in b} F(\sigma_i).$

Notice that, by Lemma \ref{2.100}, $\mu_{\phi. \La, \1}^{\sigma}$
is $\sigma$-symmetric. Then, the RCR's can also be taken
$\sigma$-symmetric, in the sense that if $\omega_b \in \eta_b$
then also $(\sigma_b-\omega_b) \in \eta_b$; in fact, 
if $\nu$ is the base of a RCR of $\mu_{\phi. \La, \1}^{\sigma}$,
also $\nu'$ defined by $\nu'(\eta_b)= (\nu(\eta_b)+\nu(\sigma -\eta_b))/2$,
where for a set of configurations $A \subseteq \Omega_b$, $\eta_b-
A=\{\omega_b: \omega_b=\eta_b-\omega'_b \text{ for some } \omega'_b \in 
\Omega_b\}$, is a RCR for $\mu_{\phi. \La, \1}^{\sigma}$.

As the focus is on active and non active hyperbonds, we 
introduce now $H'=\prod_{b \in \mathcal B(\La)} \{0,1\}$,
$1$ standing for "active", and consider the map
$\mathscr A:H \rightarrow H'$ such that 
$
(\mathscr A(\eta))_b= \mathbb I_{
 (\eta_b \text{ is active}) } $. The measure
 $\mathscr A(P_{\phi. \La, \1}^{\sigma})$ describes
 active hyperbonds for the given $\sigma$,
 and we consider
 the   {\bf Integrated Random Cluster distribution on
 active hyperbonds }
\begin{eqnarray} \label{IRC}
\overline P_{\phi. \La, \1} (\eta')
=E_{\rho_{\phi. \La, \1} }(\mathscr A(P_{\phi. \La, \1}^{\sigma}))(\eta')
= \sum_{\sigma \in \Sigma} \mathscr A(P_{\phi. \La, \1}^{\sigma}) (\eta')
\hskip .1cm
\rho_{\phi. \La, \1} (\sigma)
\end{eqnarray}
defined on $H'$. 

The definition of $
\overline P_{\phi. \La, \1}$ is such that if an hyperbond $b$ is
fully included in the overlap region $K(\sigma)$
(in which there is only one pair of spin values satisfying
the constraints),
then  $b$ is automatically non active; this means that \eqref{IRC}
enhances the role of non active hyperbonds,
thereby making the estimates of the next section more effective.

\section{Main results} 

\subsection{Correlation inequality} 

Our main result  is a correlation inequality based on 
active hyperbond 
connectivity distributed according to the  integrated random cluster probability
$\overline P_{\phi. \La, \1}$. 
We have
\begin{theorem} \label{2.6}
For all $\La, \mathcal B \subseteq \mathcal P(\La)$,
 Gibbs probability $\mu=\mm$,
any collection of Bernoulli $\mathcal B(\La )$-RCR's
$\{\nu^{\sigma}\}_{K \subseteq \La, \alpha \in \Omega_K}$,
and any two events $A, B \subseteq \mathcal P(\Omega)$
with supports $\La_A, \La_B$, respectively, we have
\begin{eqnarray} \label{2.7}
|\mu(A \cap B)-\mu(A) \mu(B)| \leq \overline P_{\phi. \La, \1} (\La_A 
\xleftrightarrow {act} \La_B). 
\end{eqnarray}

\end{theorem}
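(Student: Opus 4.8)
The plan is to reduce the two‑copy correlation to a statement about a single $\sigma$‑symmetric Gibbs measure, and then run an FK‑style "coupling through disconnection" argument on its RCR. The starting point is the decomposition from the Preliminaries,
\[
\mu(A\cap B)=\sum_{\sigma\in\Sigma}\mu^{\sigma}_{\phi,\La,\1}(A\cap B)\,\rho_{\phi,\La,\1}(\sigma),\qquad
\mu(A)=\sum_{\sigma}\mu^{\sigma}_{\phi,\La,\1}(A)\,\rho(\sigma),
\]
and similarly for $\mu(B)$. However, working with $\mu(A\cap B)-\mu(A)\mu(B)$ directly is awkward because the product $\mu(A)\mu(B)$ mixes different $\sigma$'s. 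The key trick is to pass to the \emph{two independent copies}: if $(\omega^{(1)},\omega^{(2)})$ is distributed as $\mu\times\mu$, then $\mu(A\cap B)-\mu(A)\mu(B)$ can be rewritten as a covariance that, once we condition on $\sigma$, becomes $\sum_\sigma \rho(\sigma)\big[\mu^{\sigma}(A\cap B)-\mu^{\sigma}(\tilde A)\,\mu^{\sigma}(B)\big]$ for suitable events $\tilde A$ obtained by the $\sigma$‑reflection on the support $\La_A$ — here one uses that $A$ occurs for $\omega^{(1)}$ iff the reflected event occurs for $\sigma-\omega^{(1)}$, and that $\mu^{\sigma}$ is $\sigma$‑symmetric (Lemma \ref{2.100}). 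The upshot is that it suffices to bound, for each fixed $\sigma$,
\[
\big|\mu^{\sigma}_{\phi,\La,\1}(A\cap B)-\mu^{\sigma}_{\phi,\La,\1}(A')\,\mu^{\sigma}_{\phi,\La,\1}(B)\big|\ \le\ \mathscr A\!\big(P^{\sigma}_{\phi,\La,\1}\big)\big(\La_A\xleftrightarrow{act}\La_B\big),
\]
and then average against $\rho$ to recover the right‑hand side $\overline P_{\phi,\La,\1}(\La_A\xleftrightarrow{act}\La_B)$ of \eqref{2.7} via the definition \eqref{IRC}.

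\textbf{The single‑$\sigma$ bound via the RCR.} Fix $\sigma$ and work with the joint law $Q=Q^{\sigma}_{\phi,\La,\1}(\omega,\eta)$ on spins and hyperbond variables from \eqref{2.32}, whose spin marginal is $\mu^{\sigma}$ and whose hyperbond marginal is $P^{\sigma}$. Let $\mathcal C=\mathcal C(\eta)$ be the union of all active‑hyperbond clusters that meet $\La_A$, and condition on $\eta$. On the event $\{\La_A\centernot{\xleftrightarrow{act}}\La_B\}$, the region $\mathcal C$ and its complement are \emph{not} joined by any active hyperbond; since a hyperbond that is not active imposes no constraint ($\eta_b=\Omega_b$), the conditional law of $\omega$ given $\eta$ factorizes into independent pieces on $\mathcal C$ and on $\mathcal C^{c}$ — this is exactly the classical FK "finite‑energy / product over clusters" observation, now in the hyperbond setting. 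Because $\La_A\subseteq\mathcal C$ and $\La_B\subseteq\mathcal C^{c}$ on that event, $A$ (which depends only on spins in $\La_A$) and $B$ (depending only on $\La_B$) become conditionally independent, so the conditional covariance vanishes. Hence
\[
\mu^{\sigma}(A\cap B)-\mu^{\sigma}(A)\mu^{\sigma}(B)
=\sum_{\eta}P^{\sigma}(\eta)\,\big[\,Q(A\cap B\mid\eta)-Q(A\mid\eta)\,Q(B\mid\eta)\,\big],
\]
and every term with $\{\La_A\centernot{\xleftrightarrow{act}}\La_B\}$ drops out. The surviving terms are bounded in absolute value by $P^{\sigma}$‑mass of $\{\La_A\xleftrightarrow{act}\La_B\}$, and since "connected by active hyperbonds" is a function of $\mathscr A(\eta)$ only, this mass equals $\mathscr A(P^{\sigma})(\La_A\xleftrightarrow{act}\La_B)$. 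One small care point: the decomposition above uses $\mu^{\sigma}(A)\mu^{\sigma}(B)$, whereas Step 1 produced $\mu^{\sigma}(A')\mu^{\sigma}(B)$ with a reflected $A'$; here $\sigma$‑symmetry of $\mu^{\sigma}$ gives $\mu^{\sigma}(A')=\mu^{\sigma}(A)$, and one checks the reflection does not move the support out of $\La_A$, so the conditional‑independence argument applies verbatim to $A'$ as well.

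\textbf{Assembling and the main obstacle.} Putting the two steps together,
\[
|\mu(A\cap B)-\mu(A)\mu(B)|
\le\sum_{\sigma}\rho(\sigma)\,\mathscr A(P^{\sigma})\big(\La_A\xleftrightarrow{act}\La_B\big)
=\overline P_{\phi,\La,\1}\big(\La_A\xleftrightarrow{act}\La_B\big),
\]
which is \eqref{2.7}. The routine parts are the finite‑energy factorization and the cluster bookkeeping, which are standard. \textbf{The hard part} is Step 1: making precise the passage from $\mu(A)\mu(B)$ in the product‑of‑two‑copies picture to a single sum $\sum_\sigma\rho(\sigma)[\mu^{\sigma}(A\cap B)-\mu^{\sigma}(A')\mu^{\sigma}(B)]$ with the \emph{correct} reflected event and with no residual cross terms between distinct values of $\sigma$. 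One must verify that $\{\omega^{(1)}\in A\}$ translates, under the change of variables $(\omega^{(1)},\omega^{(2)})\mapsto(\omega,\sigma)$ with $\omega=\omega^{(1)}$, into an event that is still local in $\La_A$ on each slice $W_\sigma$, and that the independence of the two copies is exactly what collapses $\mu(A)\mu(B)$ into a $\sigma$‑by‑$\sigma$ expression rather than a double sum over $(\sigma,\sigma')$; the fact that $A$ depends on only one copy is essential here, and it is where the hypothesis that $A,B$ have finite supports $\La_A,\La_B$ is genuinely used together with $\sigma$‑symmetry. Once that identity is in hand, the rest is the FK argument applied uniformly in $\sigma$ and then integrated against $\rho$.
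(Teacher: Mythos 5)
Your Step 1 reduction is not available as stated, and this is where the proposal breaks. Conditioning the product of the two copies on the slices $W_\sigma$ turns $\mu(A)\mu(B)=(\mu\times\mu)(A\times B)$ into $\sum_{\sigma}\rho_{\phi,\La,\1}(\sigma)\,\mu^{\sigma}_{\phi,\La,\1}\big(A\cap(\sigma-B)\big)$, because on $W_\sigma$ the second copy is \emph{deterministically} $\omega^{(2)}=\sigma-\omega^{(1)}$; it does not turn into $\sum_{\sigma}\rho(\sigma)\,\mu^{\sigma}(\tilde A)\,\mu^{\sigma}(B)$, a product of two $\mu^{\sigma}$-probabilities. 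Consequently, even if your per-$\sigma$ covariance bound were true, it would only compare $\mu(A\cap B)$ with $\sum_{\sigma}\rho(\sigma)\mu^{\sigma}(A)\mu^{\sigma}(B)$, which differs from $\mu(A)\mu(B)$ by the $\rho$-covariance of $\sigma\mapsto\mu^{\sigma}(A)$ and $\sigma\mapsto\mu^{\sigma}(B)$ — a term your argument never controls. The paper's proof works around exactly this point: it establishes the one-sided inequality $\mu^{\sigma}(A\cap B)\le\mu^{\sigma}\big(A\cap(\sigma-B)\big)+\mathscr A(P^{\sigma})(\La_A\xleftrightarrow{act}\La_B)$ by flipping the spins on the cluster of $B$ (using the $\sigma$-symmetry of the RCR from Lemma \ref{2.100}), and only the subsequent average over $\rho$ reassembles $\sum_{\sigma}\rho(\sigma)\mu^{\sigma}(A\cap(\sigma-B))$ into exactly $\mu(A)\mu(B)$; repeating with $B^c$ gives the modulus in \eqref{2.7}. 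The joint event $A\cap(\sigma-B)$, not a per-$\sigma$ product, is the object that carries the argument.

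The per-$\sigma$ bound itself also has a gap. You use the identity $\mu^{\sigma}(A\cap B)-\mu^{\sigma}(A)\mu^{\sigma}(B)=\sum_{\eta}P^{\sigma}(\eta)\big[Q(A\cap B\mid\eta)-Q(A\mid\eta)Q(B\mid\eta)\big]$, but by the conditional covariance formula the right-hand side omits $\mathrm{Cov}_{P^{\sigma}}\big(Q(A\mid\eta),Q(B\mid\eta)\big)$, which is neither zero in general nor supported on the connection event. This missing term is precisely the obstruction the paper isolates in Example \ref{Ex1}: there the covariance is strictly positive while the active-connection probability in the measure's own RCR vanishes, so ``spins are conditionally independent given a disconnecting $\eta$'' cannot by itself yield a covariance-versus-connectivity bound. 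To salvage your route you would have to show that, on the disconnection event, the conditional probabilities $Q(A\mid\eta)$ and $Q(B\mid\eta)$ are constant or uncorrelated under $P^{\sigma}$, and $\sigma$-symmetry does not give this for general local events $A,B$. You correctly flagged Step 1 as the hard part, but the identity you propose to prove there is false, and the mechanism that actually proves the theorem (cluster flip of $B$ inside each $\sigma$, then exact reconstruction of the two-copy product across $\sigma$) is of a different nature.
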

In  words, the correlation between any pair of local events $A, B$ is bounded
by the  active hyperbond connectivity in RCR's of the non overlap configuration distributions, averaged
over the overlap configuration.

\begin{proof}
We start from a preliminary argument about $\mu^{\sigma}$,
the non overlap configuration distribution associated to $\mu=\mm$. By Lemma
 \ref{2.100}, $\mu^{\sigma}$ is $\mathcal B(\Lambda)$-Gibbs for each
 $\sigma \in \Sigma$; by Section $2.2$ and \cite{BG13}, it admits 
  Bernoulli RCR's, so the assumptions make sense.

Given a configuration $\eta$ and a vertex $i \in V$, 
we let the cluster $C(i)$ be the set of vertices connected
to $i$  by active 
hyperbonds (see Section $2.3$), each possibly consisting of just one vertex. 
%Notice that 
%the same clusters are formed under $A(\eta)$
We
denote such clusters by 
 $C_1(\eta), \dots, C_{t(\eta)}(\eta)$, with 
 $\cup_{j=1}^{t(\eta)} C_j(\eta) = \La$.
 
 Suppose that  $\eta$ is such that 
  $\La_A \centernot {\xleftrightarrow {act}} \La_B$, 
  where $\La_A $ and $\La_B$ are the supports of the given  $A$ and $B$;
  then for each $j$, $C_j(\eta)$ is connected to either $\La_A $ or $\La_B$,
  but not to both. Assume then that  $C_j(\eta)$ is connected to 
 $\La_A $ for $j=1, \dots, k$, and to $\La_B$ for $j=k+1, \dots, t(\eta)$,
 and let $Cl(A)=C_1(\eta) \cup \dots C_k(\eta)$ and
 $Cl(B)=C_{k+1}(\eta) \cup \dots C_t(\eta)(\eta)$
 indicate the cluster of $A$, and of $B$, respectively.
 Then 
 $\mathbb I_{ \omega \in A\cap B}
 =\mathbb I_{ ( \omega_{Cl(A)  } \in A)}
 \mathbb I_{ ( \omega_{Cl(B) } \in B)}$.
 
 In addition, there are no active $b$'s such that $b \cap Cl(A)
  \neq \emptyset$ and $b \cap Cl(B)\neq \emptyset$, so, for all such $b$'s, $\mathbb I_{(\omega_b \in \eta_b)}=1$.
 This justifies the third equality in the next formula.
 
 Next, recall that by the symmetry of the RCR,
 $\omega_b \in \eta_b$ if and only if $\sigma-\omega_b \in \eta_b$.
 This justifies the fourth equality below.

We then have

\begin{eqnarray} \label{mainformula1}
\mu^{\sigma}( A \cap B) &= & \sum_{\omega \in  A \cap B} \mu^{\sigma}(\omega) \nonumber\\
&=& \sum_{\omega \in  A \cap B}   \frac{1}{Z_1} \sum_{\eta \in H} \nu^{\sigma}(\eta)
\mathbb I_{ \eta \sim \omega}    
\nonumber \\ 
&\leq&  \frac{1}{Z_1} \left(
 \sum_{\eta \in H^{\sigma}: \La_A \centernot {\xleftrightarrow {act}} \La_B}  
  \nu^{\sigma}(\eta) \sum_{\omega \in \Omega_{\La}}
\mathbb I_{ \eta \sim \omega} 
\mathbb I_{  \omega \in A\cap B} \right.
  \nonumber \\
 &&\left. \quad \quad  + \sum_{\eta \in H: \La_A \xleftrightarrow {act} \La_B} 
 \sum_{\omega \in \Omega_{\Lambda}} 
  \nu^{\sigma}(\eta)
\mathbb I_{ \eta \sim \omega}  \right)
  \nonumber  \\
  &=& \frac{1}{Z_1}
 \sum_{\eta \in H^{\sigma}: \La_A \centernot {\xleftrightarrow {act}} \La_B}  
  \nu^{\sigma}(\eta) \sum_{\omega \in \Omega_{\La}}
\left( \prod_{b \subseteq Cl(A)} \mathbb I_{\omega_b \in \eta_b} 
\mathbb I_{ \omega_{Cl(A) } \in A}  \right.
 \nonumber  \\
 &&\quad \quad \quad \left.
\prod_{b \subseteq Cl(B) } \mathbb I_{\omega_b \in \eta_b} 
\mathbb I_{  \omega_{Cl(B)}  \in  B} \right)
  \nonumber \\
 &&\quad \quad  +   \sum_{\eta \in H: \La_A \xleftrightarrow {act} \La_B} 
P(\eta)
  \\
 &=&  \frac{1}{Z_1}
 \sum_{\eta \in H^{\sigma}: \La_A \centernot {\xleftrightarrow {act}} \La_B}  
  \nu^{\sigma}(\eta) 
 \sum_{\omega \in A \cap (\sigma-B)} 
\mathbb I_{ \eta \sim \omega} +   \sum_{\eta' \in H': \La_A \xleftrightarrow {act} \La_B} 
\mathscr A(P)(\eta')
    \nonumber \\
&=&\sum_{\omega \in  A \cap (\sigma-B)}   \frac{1}{Z_1} \sum_{\eta \in H} \nu^{\sigma}(\eta)
\mathbb I_{ \eta \sim \omega} +  \mathscr A(P)(\La_A \xleftrightarrow {act} \La_B ) 
    \nonumber \\
&=&\mu^{\sigma}( A \cap (\sigma-B)) + \mathscr A( P)(\La_A \xleftrightarrow {act} \La_B )
 \nonumber
\end{eqnarray}

Next, by denoting $\mu=\mu_{\phi, \La, \1}$, we have
\begin{eqnarray} 
\mu( A \cap B) &= &
(\mu \times \mu)( (A \cap B) \times \Omega_{\Lambda}) \nonumber \\
 &= & \sum_{\sigma \in \Sigma}(\mu \times \mu)((A\cap B) \times \Omega|W_{\sigma}) 
\rho_{\phi. \La, \1} (\sigma)
  \nonumber \\
&= & \sum_{\sigma \in \Sigma} \mu^{\sigma}_{\phi, \La, \1}(A\cap B) 
\rho_{\phi. \La, \1} (\sigma)
 \nonumber \\
&\leq &  \sum_{\sigma \in \Sigma} (\mu^{\sigma}_{\phi, \La, \1}(A\cap (\sigma-B)) 
+  \mathscr A(P)(\La_A \xleftrightarrow {act} \La_B ) )
\rho_{\phi. \La, \1} (\sigma)
\nonumber  \\
&= &
\sum_{\sigma \in \Sigma}(\mu \times \mu)((A\cap (\sigma-B ))\times \Omega|W_{\sigma}) 
\rho_{\phi. \La, \1} (\sigma) +  \overline P_{\phi. \La, \1} (\La_A 
\xleftrightarrow {act} \La_B)
\nonumber  \\
&= &
\sum_{\sigma \in \Sigma}(\mu \times \mu)(A \times B  |W_{\sigma}) 
\rho_{\phi. \La, \1} (\sigma) +  \overline P_{\phi. \La, \1} (\La_A 
\xleftrightarrow {act} \La_B)
\nonumber  \\
&= & 
 \mu(A) \mu(B) +  \overline P_{\phi. \La, \1} (\La_A 
\xleftrightarrow {act} \La_B)
  \nonumber 
\end{eqnarray}
The same relation holds when $B$ is replaced by $B^c$, and this proves 
\eqref{2.7}.

\end{proof}

\begin{remark}
Notice that a great number of choices has to be made in selecting a 
Bernoulli RCR of $\mu^{\sigma}$
for each $\sigma$, and the goodness of the bound depends on all of these choices. Clearly,
one can get better bounds by selecting  RCR's which use hyperbonds of small size
(see  Example \ref{Ex2} below), or give high probability
to non active hyperbond variables.
\end{remark}
\begin{remark} \label{r1}
Notice also that the  inequality in  \eqref{mainformula1} depends on 
having removed the condition that $\omega \in A \cap B$ when 
$ \La_A$ is not actively connected to $ \La_B$ in $\eta$. Maintaining that
condition would give an exact expression for the covariance of $A$ and $B$,
but the connectivity event would no longer be  measurable with 
respect to the $\eta$ variables (see  Example \ref{Ex2} below).
\end{remark}
One can get a bound on the covariance of two local random variables
by simply summing the previous on each pair of local configurations:
\begin{corollary} \label{cor1}
With the assumptions of Theorem \ref{2.6}, and two random variables $X$, $Y$,
replacing the events $A$ and $B$, depending on two disjoint finite sets
$\La_X$ and $\La_Y$, respectively, one has
$$
|Cov(X,Y)| \leq  (|F|)^{|\La_X| + |\La_Y|} \overline P_{\phi. \La, \1} (\La_X
\xleftrightarrow {act} \La_Y)
$$

\end{corollary}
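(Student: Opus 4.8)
The plan is to reduce the covariance bound to the event-wise inequality already established in Theorem~\ref{2.6}. First I would write $X$ and $Y$ in terms of their level sets: since $X$ depends only on the finitely many coordinates in $\La_X$, it takes at most $|F|^{|\La_X|}$ distinct values, say on the partition $\{A_s\}_s$ of $\Omega$ into events of the form $\{\omega_{\La_X} = \xi\}$ for $\xi \in F^{\La_X}$; similarly $Y$ is constant on each block $\{B_t\}_t$ of an analogous partition indexed by $F^{\La_Y}$. Writing $x_s$ and $y_t$ for the corresponding values, we have $X = \sum_s x_s \mathbb I_{A_s}$ and $Y = \sum_t y_t \mathbb I_{B_t}$, so that
\begin{eqnarray*}
\mathrm{Cov}(X,Y) = \sum_{s,t} x_s y_t \bigl( \mu(A_s \cap B_t) - \mu(A_s)\mu(B_t) \bigr).
\end{eqnarray*}

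Next I would bound each term by the triangle inequality: $|\mathrm{Cov}(X,Y)| \le \sum_{s,t} |x_s||y_t|\,|\mu(A_s \cap B_t) - \mu(A_s)\mu(B_t)|$. Here one must be slightly careful about normalization — the clean way to avoid carrying $\|X\|_\infty \|Y\|_\infty$ factors and instead get a pure combinatorial prefactor is to note that in the intended reading $X,Y$ are (or may be taken to be) $\{0,1\}$-valued, or more generally that the statement is really about replacing a single pair of events $A,B$ by a family; so I would either (i) state the corollary for indicator-type variables and let the reader rescale, or (ii) absorb $\sup_s |x_s|\sup_t|y_t|$ into the constant, noting the paper's phrasing "replacing the events $A$ and $B$" signals that $x_s, y_t \in \{0,1\}$ is the operative case. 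Taking the former, each summand $|\mu(A_s \cap B_t) - \mu(A_s)\mu(B_t)|$ is bounded by $\overline P_{\phi. \La, \1}(\La_{A_s} \xleftrightarrow{act} \La_{B_t})$ via Theorem~\ref{2.6}, applied with the same collection of Bernoulli RCR's.

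The final step is a monotonicity observation: each $A_s$ has support contained in $\La_X$ and each $B_t$ has support contained in $\La_Y$, and the event $\{\La_{A_s} \xleftrightarrow{act} \La_{B_t}\}$ is contained in $\{\La_X \xleftrightarrow{act} \La_Y\}$ since connecting smaller sets is harder; hence $\overline P_{\phi. \La, \1}(\La_{A_s} \xleftrightarrow{act} \La_{B_t}) \le \overline P_{\phi. \La, \1}(\La_X \xleftrightarrow{act} \La_Y)$ for every $s,t$. Summing over the at most $|F|^{|\La_X|}$ choices of $s$ and $|F|^{|\La_Y|}$ choices of $t$ gives the factor $|F|^{|\La_X| + |\La_Y|}$ and completes the proof. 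I expect no serious obstacle here — the argument is a routine "decompose into level sets, apply the event inequality term by term, control the number of terms" reduction. The only point requiring care is bookkeeping of the constant: making precise that the $|F|^{|\La_X|+|\La_Y|}$ prefactor is exactly the product of the cardinalities of the two level-set partitions, and that Theorem~\ref{2.6} may be invoked uniformly across all the pairs $(A_s, B_t)$ with a single fixed family of RCR's, so that the integrated random cluster measure $\overline P_{\phi. \La, \1}$ on the right-hand side is the same throughout.
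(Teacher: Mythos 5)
Your proof is correct and is essentially the paper's own argument: the corollary is introduced there as following ``by simply summing the previous on each pair of local configurations,'' i.e.\ precisely your level-set decomposition into the $|F|^{|\La_X|}\cdot|F|^{|\La_Y|}$ cylinder pairs, with Theorem \ref{2.6} applied uniformly to each pair and the connectivity event being the same since each cylinder has support $\La_X$ (resp.\ $\La_Y$). Your normalization caveat is also well taken: the stated constant implicitly assumes $|X|,|Y|\le 1$ (as in the $\pm1$-spin check of Example \ref{Ex2}, where $|Cov(\omega_1,\omega_3)|=4|\Delta\mu|\le 4\,\overline P$), and otherwise a factor $\|X\|_\infty\|Y\|_\infty$ must be carried along.
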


\begin{example} \label{Ex2}
Continuing Example \ref{Ex1}. We apply Theorem \ref{2.6} by conditioning on 
$\sigma \in \Sigma = \{-2,0,2\}^{\{1,2,3\}}$. 
\bigskip

If $\sigma_i \neq 0$ for exactly one $i \in \{1,2,3\}$, then 
$|\Omega(\sigma)|=4$;  $\mu^{\sigma}$, however,
is symmetric under flip of the remaining spins, i.e. those located at $j$ and $k$,
with $j\neq k$, $j\neq i \neq k$, and,  therefore, only two parameters are needed,
one for $\omega_j=\omega_k$ and the other for the case $\omega_j=-\omega_k$.

If $\sigma=(0,2,0)$, however, $\mu^{\sigma}(\omega)= \frac{1}{Z} e^{J_{23}(
\omega^{(1)}_3 + \omega^{(2)}_3)}
= \frac{1}{Z}$. So, only one parameter is needed; this can be realized with a RCR 
having just a field term (i.e. bonds of size $1$), and $P^{\sigma}(1
\xleftrightarrow {act} 3)=0$.
The same occurs for $\sigma=(0,-2,0)$.

If $\sigma= (2,0,0)$, then the RCR of $\mu^{\sigma}$ has an active bond
$\eta_{2,3}$, but that does not connect $1$ and $3$, so again $P^{\sigma}(1
\xleftrightarrow {act} 3)=0$. The same occurs if $\{i:\sigma_i \neq 0\}=\{1\}$
or $\{3\}$.

If $\sigma_i \neq 0$ for more than one $i$, then no active bond is needed, as
$\mu^{\sigma}$ is binary and symmetric.

\bigskip

This leaves then only one interesting case, namely the configuration
$\tilde \sigma$ such that
$\tilde \sigma_i \equiv 0$. In this case, for $\omega=\omega^{(1)}$,
we have
\begin{eqnarray}
\mu^{\tilde\sigma} (\omega)
&=& \mu \times \mu((\omega^{(1)},-\omega^{(1)})| W_{\tilde\sigma})\\
&=&\frac{1}{Z_{\tilde \sigma}} e^{J_{12}(\mathbb I_{(\omega^{(1)}_1=\omega^{(1)}_2=-1)}
+\mathbb I_{(\omega^{(2)}_1=\omega^{(2)}_2=-1)})+
J_{23}(\mathbb I_{(\omega^{(1)}_2=\omega^{(1)}_3=1)}
+\mathbb I_{(\omega^{(2)}_2=\omega^{(2)}_3=1)}) } \nonumber \\
&=&\frac{1}{Z_{\tilde \sigma}} e^{J_{12}\mathbb I_{(\omega^{(1)}_1=\omega^{(1)}_2)}
+
J_{23}\mathbb I_{(\omega^{(1)}_2=\omega^{(1)}_3)},
 } \nonumber
\end{eqnarray}
and  $\rho(W_{\tilde\sigma})= \frac{2(e^{  J_{12} + J_{23}}+e^{  J_{12} }
+e^{  J_{23}}+1)}{Z^2}=
 \frac{Z_{\tilde \sigma}}{Z^2}$.
A Bernoulli RCR of $\mu^{\tilde \sigma}$ can now be obtained by taking  base $\nu^{\tilde\sigma}
 = \nu^{\tilde\sigma}_{12} \times \nu^{\tilde\sigma}_{23}$,
with $\nu^{\tilde\sigma}_{ij}$ concentrated on $\{\Omega_{\{i,j\}}, \Omega_{\{i,j\}}^*\}$
where $\Omega_{\{i,j\}}^*=\{\omega_{\{i,j\}}:\omega_i=\omega_j\}$, and
 moreover, $\nu^{\tilde\sigma}_{ij}( \Omega_{\{i,j\}}^*)=1-e^{-J_{ij}}$. 
 In fact,
 for $\omega \in \Omega({\tilde\sigma})$
\begin{eqnarray*}
\sum_{\eta: \eta \sim \omega} \frac{\nu^{\tilde\sigma}(\eta)}{\sum_{\omega', \eta'}
\nu^{\tilde\sigma}(\eta') \mathbb I_{\eta' \sim \omega'}}
&=& \frac{e^{-J_{12} \mathbb I_{(\omega_1 \neq \omega_2)} 
- J_{23} \mathbb I_{(\omega_2 \neq \omega_3)}}}{Z_{\nu^{\tilde\sigma}}
} \\
&=&
\frac{e^{J_{12} \mathbb I_{(\omega_1=\omega_2)}
+ J_{23} \mathbb I_{(\omega_2=\omega_3)}} }{Z_{\tilde \sigma}}  = \mu^{\tilde\sigma}(\omega),
\end{eqnarray*}
as $ Z_{\nu^{\tilde\sigma}} = Z_{\tilde \sigma}/e^{  J_{12} + J_{23}}$.
Notice that this is almost the same representation as for the one single copy
in Example \ref{Ex1}, but now the interaction has been symmetrized.

We now have
\begin{eqnarray*}
P^{\tilde\sigma}(1 \xleftrightarrow {act} 3)&=&
\frac{1}{Z_{\tilde \sigma}} \sum_{\eta : 1 \xleftrightarrow {act} 3 \text{ in } \eta}
\nu^{\tilde \sigma}(\eta)  |\{\omega : \omega \sim \eta\}|
 \\
&=& \frac{1}{Z_{\tilde \sigma}} 2 \nu^{\tilde \sigma}_{12}(\Omega_{1,2}^*) 
\nu^{\tilde \sigma}_{23}(\Omega_{2,3}^*) 
\\
&=&  \frac{1}{Z_{\tilde \sigma}} 2(1-e^{-J_{12}})(1-e^{-J_{23}}).
\end{eqnarray*}
Finally, using the value of $\Delta (\mu) 
 \mu(\omega_1=\omega_3=1)-\mu(\omega_1=1)\mu(\omega_3=1)$ computed in Example \ref{Ex1},
\begin{eqnarray*}
\overline P(1 \xleftrightarrow {act} 3)&=&
P^{\tilde\sigma}(1 \xleftrightarrow {act} 3) 
\rho(W_{\tilde\sigma})
 \\
&=&e^{ J_{12} + J_{23} } \frac{1}{Z_{\tilde \sigma}} 2(1-e^{-J_{12}})(1-e^{-J_{23}}) \frac{Z_{\tilde \sigma}}{Z^2}
 \\
&=&\frac{2(1-e^{-J_{12}})(1-e^{-J_{23}})}{Z^2} = 2| \Delta (\mu) |,
\end{eqnarray*}
which is the inequality of Theorem \ref{2.6}.

Using next the spin spin covariance computed 
in Example \ref{Ex1}, we have
$$
|Cov(\omega_1,\omega_3)| =4|\Delta (\mu)|
\leq 4 P^{\tilde\sigma}(1 \xleftrightarrow {act} 3) ,
$$
 which is the bound described in
Corollary \ref{cor1}. 

The bounds above are not sharp for the reasons mentioned in Remark \ref{r1}, and would become
equalities
 if the conditions on $\omega_1=\omega_3=1$ were kept.

\end{example}

\subsection{Conditions for extremality and uniqueness of Gibbs phases}
\begin{corollary} \label{2.81}
Suppose that  for a sequence of b.c.'s $\{ \1 \}_{\La}$, such that the finite volume 
Gibbs measures $\mm$ converge weakly as $\La$ diverges along a specific sequence, 
the following occurs: 
for each $\La_0 \subset V$ and $ \epsilon >0$ there are $\La_1, \La_2$ such that 
\begin{eqnarray} \label{3.1.1}
(a) \hskip 2cm \overline P_{\phi, \La_3, \tilde \omega_{\La_3^c}} (\La_0 
\xleftrightarrow {act} \delta \La_1) \leq \epsilon 
\end{eqnarray}
or, alternatively,
\begin{eqnarray}
(b) \hskip 2cm  p_{\epsilon} = \rho_{\phi,\La_3, \tilde \omega_{\Lambda_3^c}}
 ( P_{\phi, \La_3, \tilde \omega_{\La_3^c}}^{\sigma}(\La_0 \xleftrightarrow {act} \delta \La_1) \leq
 \epsilon ) \geq 1- \epsilon
\end{eqnarray}
for all $\La_3 \supseteq \La_2$ in the sequence of $\La$'s.
Then the weak limit $\mu$ of $\mm$ is extremal.
\end{corollary}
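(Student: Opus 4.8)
The plan is to show that $\mu$ is trivial on its tail $\sigma$-algebra $\mathcal T=\bigcap_{\La}\mathcal F_{V\setminus\La}$, which for a Gibbs measure is equivalent to extremality. It suffices to prove $\mu(A\cap T)=\mu(A)\mu(T)$ for every local event $A$, say with support $\La_0$, and every $T\in\mathcal T$; indeed, choosing local events $A_n$ with $\mathbb I_{A_n}\to\mathbb I_T$ in $L^1(\mu)$ then forces $\mu(T)=\lim_n\mu(A_n\cap T)=\lim_n\mu(A_n)\mu(T)=\mu(T)^2$, hence $\mu(T)\in\{0,1\}$. Since $T\in\mathcal F_{V\setminus\La_1}$ for every finite $\La_1$, since $B\mapsto\mu(A\cap B)-\mu(A)\mu(B)=\mathbb E_\mu[(\mathbb I_A-\mu(A))\mathbb I_B]$ is continuous in $\mathbb I_B$ for the $L^1(\mu)$-norm, and since every $\mathbb I_B$ with $B\in\mathcal F_{V\setminus\La_1}$ is an $L^1(\mu)$-limit of indicators of local events supported in $V\setminus\La_1$, it is enough to prove: for each local $A$ and each $\epsilon>0$ there is a finite $\La_1\supseteq\La_0$ such that $|\mu(A\cap B)-\mu(A)\mu(B)|\le\epsilon$ for every local event $B$ with support $\La_B\subseteq V\setminus\La_1$.

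Then I would fix such $A$ and $\epsilon$ and invoke hypothesis~(a), applied with $\epsilon/2$ in place of $\epsilon$, to obtain $\La_1\supseteq\La_0$ and $\La_2$ (the case~(b) is reduced to~(a) below). For a local $B$ with $\La_B\subseteq V\setminus\La_1$ and any $\La_3$ in the given sequence with $\La_3\supseteq\La_2\cup\La_0\cup\La_B$, Theorem~\ref{2.6} applied to $\mu_{\phi,\La_3,\tilde\omega_{\La_3^c}}$ and to the events $A,B$ gives
\[
\bigl|\mu_{\phi,\La_3,\tilde\omega_{\La_3^c}}(A\cap B)-\mu_{\phi,\La_3,\tilde\omega_{\La_3^c}}(A)\,\mu_{\phi,\La_3,\tilde\omega_{\La_3^c}}(B)\bigr|\ \le\ \overline P_{\phi,\La_3,\tilde\omega_{\La_3^c}}\bigl(\La_0\xleftrightarrow{act}\La_B\bigr).
\]
The geometric core of the argument is the deterministic inclusion of $\eta$-events $\{\La_0\xleftrightarrow{act}\La_B\}\subseteq\{\La_0\xleftrightarrow{act}\delta\La_1\}$: a chain of sequentially directly connected active hyperbonds joining a hyperbond that meets $\La_0\subseteq\La_1$ to a hyperbond that meets $\La_B\subseteq\La_1^c$ must contain a first hyperbond $b$ meeting $\La_1^c$; this $b$ also meets $\La_1$ (it either meets $\La_0$, or is directly connected to its predecessor in the chain, which lies in $\La_1$), hence every vertex of $b$ belongs to $\delta\La_1$, and the initial segment of the chain ending at $b$ already realizes $\La_0\xleftrightarrow{act}\delta\La_1$. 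Consequently the right-hand side above is at most $\overline P_{\phi,\La_3,\tilde\omega_{\La_3^c}}(\La_0\xleftrightarrow{act}\delta\La_1)\le\epsilon/2$ by~(a), uniformly over the admissible $B$ and over $\La_3\supseteq\La_2$.

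To handle hypothesis~(b), I would split the $\rho_{\phi,\La_3,\tilde\omega_{\La_3^c}}$-average in the definition~\eqref{IRC} of $\overline P$ according to whether $P^{\sigma}_{\phi,\La_3,\tilde\omega_{\La_3^c}}(\La_0\xleftrightarrow{act}\delta\La_1)\le\epsilon/2$, which gives
\[
\overline P_{\phi,\La_3,\tilde\omega_{\La_3^c}}\bigl(\La_0\xleftrightarrow{act}\delta\La_1\bigr)\ \le\ \epsilon/2+\rho_{\phi,\La_3,\tilde\omega_{\La_3^c}}\bigl(P^{\sigma}_{\phi,\La_3,\tilde\omega_{\La_3^c}}(\La_0\xleftrightarrow{act}\delta\La_1)>\epsilon/2\bigr)\ \le\ \epsilon ,
\]
so~(b) yields the same finite-volume estimate. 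In either case, since $A$, $B$ and $A\cap B$ are local and $\mu_{\phi,\La_3,\tilde\omega_{\La_3^c}}\to\mu$ weakly along the sequence, letting $\La_3\uparrow V$ gives $|\mu(A\cap B)-\mu(A)\mu(B)|\le\epsilon$ for every local $B$ supported in $V\setminus\La_1$. By the $L^1(\mu)$-continuity recorded in the first paragraph this extends to all $B\in\mathcal F_{V\setminus\La_1}$, in particular to every $T\in\mathcal T$; as $\epsilon>0$ was arbitrary, $\mu(A\cap T)=\mu(A)\mu(T)$, and as $A$ was an arbitrary local event, $\mu$ is tail trivial, hence extremal.

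I expect the main obstacle to be making the inclusion $\{\La_0\xleftrightarrow{act}\La_B\}\subseteq\{\La_0\xleftrightarrow{act}\delta\La_1\}$ fully rigorous for a general locally finite hyperbond family $\mathcal B$ (in particular, verifying that $\delta\La_1$ is indeed the correct cut set separating $\La_0$ from $\La_1^c$ for active-hyperbond connectivity), together with the bookkeeping needed so that the finite-volume bound of Theorem~\ref{2.6} really survives the weak limit: every relevant finite support must eventually lie inside $\La_3$, and the limit $\mu$ must be a genuine Gibbs measure, so that extremality coincides with triviality on $\mathcal T$.
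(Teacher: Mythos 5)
Your proposal is correct and follows essentially the same route as the paper: apply Theorem \ref{2.6} at finite volume, use the deterministic inclusion $\{\La_0 \xleftrightarrow{act} \La_B\} \subseteq \{\La_0 \xleftrightarrow{act} \delta\La_1\}$ together with hypothesis (a), reduce (b) to (a) by splitting the $\rho$-average, and conclude via triviality of the tail $\sigma$-algebra. You merely spell out the steps the paper leaves implicit (passing the finite-volume bound to the weak limit and approximating tail events by local ones), which is a welcome but not different argument.
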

\begin{proof}
(a) Consider the weak limit $\mu$ of $\mm$. Consider an event $A$
 with finite support $\La_A=\La_0$
and take
 $\epsilon >0$,  
  and $\La_1$ and $\La_2$ as in the hypothesis; then for any event $B$  with support 
  $\La_B= \La_4 \subseteq \La_2$ such that $\La_4 \cap \La_1 = \emptyset$
we have
 \begin{eqnarray} \label{2.9}
 |\mu(A \cap B) -\mu(A) \mu(B)|  &\leq&  
\overline P_{\phi, \La_3, \tilde \omega_{\La_3^c}} (\La_A 
\xleftrightarrow {act} \La_B) \nonumber \\
&\leq&   \overline P_{\phi, \La_3, \tilde \omega_{\La_3^c}} (\La_0
\xleftrightarrow {act} \delta \La_1) \leq \epsilon
\nonumber
\end{eqnarray}
for all $\La_3 \supseteq \La_2$. Hence, the $\sigma$-algebra at
infinity is trivial,
which implies extremality of $\mu$
in the set $K_\phi$ of Gibbs states for $\phi$
(see, e.g., Theorem 1.11 in \cite{Ru04})  
In case (b), 
$ \overline P_{\phi, \La_3, \tilde \omega_{\La_3^c}} (\La_A 
\xleftrightarrow {act} \delta \La_1)=
 E_ {\rho_{\phi. \La_3, \tilde \omega_{\La_3^c}} }
(P^{\sigma}_{\phi, \La_3, \tilde \omega_{\La_3^c}}(\La_A 
\xleftrightarrow {act} \delta \La_1)) \\
\leq  (\epsilon p_{\epsilon} + (1-p_{\epsilon} )) \leq 2 \epsilon ,
$
so that case (a) applies.
\end{proof}
We also get a condition for uniqueness of the Gibbs state if the condition above holds for
all possible sequences of b.c.'s since $K_\phi$ is 
convex and  each element of $K_\phi$
would then be extremal. We thus have
\begin{corollary} \label{2.91}
If the conditions of Corollary \ref{2.81} hold for all sequences of b.c.'s
$\{ \1 \}_{\La}$, then the Gibbs state is unique.
\end{corollary}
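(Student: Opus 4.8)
The plan is to deduce the statement from Corollary~\ref{2.81} by a soft convexity argument. Recall that the set $K_\phi$ of Gibbs states for $\phi$ is convex (here $\Omega=F^V$ is compact, so $K_\phi$ is a nonempty compact convex set, in fact a simplex), and that its extreme points are exactly the extremal, i.e.\ tail--trivial, Gibbs measures (see Theorem~1.11 in \cite{Ru04}). The decisive elementary remark is that a convex set in which \emph{every} point is an extreme point is a singleton: if $\mu=\tfrac12\mu_1+\tfrac12\mu_2$ with $\mu_1\neq\mu_2$ then $\mu$ is not extreme, hence not extremal. Consequently it suffices to prove that, under the hypothesis, every $\mu\in K_\phi$ is extremal; uniqueness of the Gibbs state is then immediate.

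To see that every $\mu\in K_\phi$ is extremal I would first note that the per--sequence hypothesis already implies a bound uniform in the boundary condition: for each finite $\La_0$ and $\epsilon>0$ there are finite $\La_1\supseteq\La_0$ and $\La_2$ with $\overline P_{\phi,\La_3,\tilde\omega_{\La_3^c}}(\La_0\xleftrightarrow{act}\delta\La_1)\le\epsilon$ for all $\La_3\supseteq\La_2$ in the exhausting sequence and \emph{all} $\tilde\omega\in\Omega$. This follows by a short contradiction argument: if no such pair worked, one could assemble counterexamples on an increasing family of boxes into a single bad boundary sequence, using that the event $\{\La_0\xleftrightarrow{act}\delta\La_1\}$ is non--increasing in $\La_1$ (an active chain reaching a larger shell must cross every intermediate one), contradicting the assumption that the hypothesis holds for all boundary sequences. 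Granting this uniform decoupling, Theorem~\ref{2.6} makes every local $A$ (supported in $\La_0$) $\epsilon$--independent of every event supported outside $\La_1$, \emph{simultaneously under all} $\mu_{\phi,\La_3,\tilde\omega}$; writing an arbitrary $\mu\in K_\phi$ via the DLR equation $\mu(\,\cdot\,)=\int \mu_{\phi,\La_3,\tilde\omega_{\La_3^c}}(\,\cdot\,)\,d\mu(\tilde\omega)$ and letting $\La_3\uparrow V$ (with $\La_1\uparrow V$, $\epsilon\downarrow0$) then shows $A$ is independent of the tail $\sigma$--algebra $\mathcal F_\infty=\bigcap_\La\mathcal F_{\La^c}$, so that $\mathcal F_\infty$ is $\mu$--trivial and $\mu$ is extremal. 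A softer alternative, avoiding the passage to the limit, is to observe that every $\mu\in K_\phi$ is itself a weak limit of finite--volume Gibbs measures $\mu_{\phi,\La_n,\tilde\omega^{(n)}}$ along a suitable boundary sequence (for extremal $\mu$ this is standard, and a general $\mu$ is a barycentre of extremal ones, reached by a diagonal argument on $\La_n\uparrow V$ and on the boundary configurations, using compactness of $\Omega$), and to apply Corollary~\ref{2.81} directly to that sequence.

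The genuine obstacle is precisely this last passage from finite volume to an arbitrary Gibbs state. In the hands--on route one must convert the decoupling of a fixed local $A$ from a fixed neighbourhood $\La_1$ into control of $\mu(A\mid\mathcal F_{\La_3^c})=\mu_{\phi,\La_3,\tilde\omega}(A)$ uniformly in $\tilde\omega$; since the boundary dependence enters through the exponentially many configurations on the inner boundary of $\La_3$, a single scale is not enough, and the estimate must be applied at several scales $\La_0\subseteq\La_1\subseteq\La_1'\subseteq\cdots$ (each legitimate because the hypothesis is posited for every finite set), peeling off one shell at a time via the finite--volume Markov property of $\mu_{\phi,\La_3,\tilde\omega}$. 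In the soft route the delicate input is instead the realizability of every Gibbs state as a weak limit of finite--volume Gibbs measures with deterministic boundary conditions. Everything else --- the convexity remark, extraction of convergent subsequences by compactness of $\Omega$, the uniformity in $\tilde\omega$, and the invocation of Theorem~\ref{2.6} --- is routine, and I would present the convexity reduction together with the soft route as the cleanest write--up.
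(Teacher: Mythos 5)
Your argument is essentially the paper's own: the paper disposes of this corollary in one line by noting that $K_\phi$ is convex and that, under the hypothesis applied to all boundary sequences, every element of $K_\phi$ is extremal via Corollary \ref{2.81}, so $K_\phi$ must be a singleton --- exactly your convexity reduction. The ``delicate input'' you flag (realizing an arbitrary Gibbs state, or at least enough of $K_\phi$, as a weak limit of finite-volume measures with deterministic boundary conditions, or alternatively the multiscale uniform-decoupling route) is precisely the step the paper leaves implicit, so your write-up matches, and indeed makes explicit, the paper's proof.
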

To avoid technicalities the above results are stated in terms of finite volume distributions, but 
their corresponding infinite volume statements would be that 
absence of percolation of the RCR active hyperbonds in the non overlap configuration
distribution with probability one with respect to the distribution of the overlap configuration
implies uniqueness of the Gibbs state.
When sufficient conditions for such absence of percolation are expressed locally, then
our current condition closely resembles others present in the literature. Section 4 discusses these
connections. One local condition is as follows:

\begin{corollary} \label{2.200}
Consider the field $X_b^{({\phi, \La, \1})}, b \in \mathcal B$, given by 
$X_b^{({\phi, \La, \1})} (\eta) = \mathbb I_{(\eta_b \text{ is active})}$
when $\eta$ is distributed according to  $\overline P_{\phi, \La, \1}$
on $\{b: b \cap \La \neq \emptyset\}$.
If, for a sequence of b.c.'s $\1$ and each weak limit in $\La \rightarrow V$, the field 
 $X_b^{({\phi, \La, \1})}$ 
is stochastically dominated by  hyperbond occupation variables
distributed according to a Bernoulli probability $\tilde P$
on the Borel $\sigma$-algebra of $\prod_{b \in \mathcal B} \{0,1\}$,
and there is no percolation of occupied hyperbonds for $\tilde P$,
then
each weak limit of $\mm$'s is extremal.

If this happens for all b.c.'s then the Gibbs distribution is unique.

A sufficient condition for the above domination is that 
\begin{eqnarray}
p_{b}=\sup_{ \eta_{\mathcal B \setminus b}}
\overline P_{\phi, \La, \1}(\eta_b \text{ active } | \eta_{\mathcal B \setminus b}) \label{2.30}
\end{eqnarray} 
is such that there is no percolation of occupied hyperbonds when
they are independently selected with probability $p_b$.

\end{corollary}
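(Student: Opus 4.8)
The plan is to reduce everything to Corollary~\ref{2.81} and Corollary~\ref{2.91}: I would first show that stochastic domination by a non-percolating Bernoulli measure forces hypothesis \eqref{3.1.1}(a) to hold, and then derive the stated sufficient condition from the classical fact that a measure with controlled single-coordinate conditionals is dominated by a product measure.

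\textbf{Step 1: reduction to Corollary~\ref{2.81}(a).} Fix a finite $\La_0\subset V$ and $\epsilon>0$. After the obvious embedding of $H'$ into $\prod_{b\in\mathcal B}\{0,1\}$ (extend by $X_b\equiv 0$ for $b\cap\La=\emptyset$), I would use that the connectivity event $\{\La_0\xleftrightarrow{act}\delta\La_1\}$ is increasing in the active-hyperbond variables and that, along a nested exhaustion, these events decrease as $\La_1\uparrow V$ to the event that $\La_0$ meets an infinite active cluster. Since $\tilde P$ is a Bernoulli product measure with no percolation, that limiting event has $\tilde P$-probability $0$, so by downward continuity one can choose $\La_1$ with $\tilde P(\La_0\xleftrightarrow{act}\delta\La_1)\leq\epsilon$. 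Applying the stochastic domination of $X_b^{(\phi,\La_3,\tilde\omega_{\La_3^c})}$ by $\tilde P$ to the increasing indicator of $\{\La_0\xleftrightarrow{act}\delta\La_1\}$ then gives $\overline P_{\phi,\La_3,\tilde\omega_{\La_3^c}}(\La_0\xleftrightarrow{act}\delta\La_1)\leq\epsilon$ for \emph{every} $\La_3\supseteq\La_1$ along the approximating sequence; with $\La_2:=\La_1$ this is exactly \eqref{3.1.1}(a), so Corollary~\ref{2.81} yields extremality of the weak limit $\mu$. If the domination hypothesis is available for all boundary conditions, then every element of $K_\phi$ arises as such a weak limit and hence is extremal; since $K_\phi$ is convex this forces $|K_\phi|=1$, which is Corollary~\ref{2.91}.

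\textbf{Step 2: the single-bond bound \eqref{2.30} implies the domination.} In finite volume $\mathcal B(\La)$ is finite; writing $\overline P=\overline P_{\phi,\La,\1}$ and enumerating the hyperbonds meeting $\La$ as $b_1,\dots,b_m$, I would note, by the tower property, that since $\sigma(X_{b_1},\dots,X_{b_{i-1}})\subseteq\sigma(X_{b'}:b'\neq b_i)$, the one-step conditional $\overline P(X_{b_i}=1\mid X_{b_1},\dots,X_{b_{i-1}})$ is the conditional expectation of $\overline P(X_{b_i}=1\mid X_{b'}:b'\neq b_i)$, hence is $\leq p_{b_i}$ by \eqref{2.30}; thus $\overline P(X_{b_i}=1\mid X_{b_1}=x_1,\dots,X_{b_{i-1}}=x_{i-1})\leq p_{b_i}$ for all admissible $(x_1,\dots,x_{i-1})$. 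The standard sequential monotone coupling --- revealing $X_{b_1},X_{b_2},\dots$ in turn and, at each step, placing an independent $\mathrm{Bernoulli}(p_{b_i})$ variable above $X_{b_i}$ --- then produces a coupling of $\overline P$ with $\tilde P:=\bigotimes_{b}\mathrm{Bernoulli}(p_b)$ under which the former lies coordinatewise below the latter, so $X_b^{(\phi,\La,\1)}\preceq\tilde P$ for every $\La$ in the sequence; if this $\tilde P$ has no percolation, Step~1 applies and the conclusions follow.

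\textbf{Where the difficulty lies.} The one genuinely delicate point is uniformity in $\La_3$: Step~1 needs a single dominating $\tilde P$ valid along the whole approximating sequence, so that the window $\La_1$ chosen to make the $\tilde P$-connectivity small can be taken independent of $\La_3$. This is exactly why the hypothesis asks for domination ``for each weak limit in $\La\to V$'' --- one $\tilde P$ per sequence --- and why, in the sufficient condition, it is essential that the bounds $p_b$ in \eqref{2.30} do not depend on $\La$. Everything else is routine measure theory together with the standard product-domination lemma.
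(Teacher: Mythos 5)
Your argument is correct and follows essentially the same route as the paper: you verify hypothesis (a) of Corollary \ref{2.81} by transferring the vanishing $\tilde P$-connectivity to $\overline P_{\phi,\La_3,\tilde\omega_{\La_3^c}}$ via stochastic domination, uniformly in $\La_3$, and then invoke Corollaries \ref{2.81} and \ref{2.91}. The only difference is cosmetic: where the paper cites the standard percolation-theoretic domination result (\cite{B93}, Corollary 1) to pass from the conditional bound \eqref{2.30} to domination by the Bernoulli product measure, you spell out that same argument via the tower property and the sequential monotone coupling.
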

\begin{proof}
If there is no percolation in $\tilde P$ then 
$\tilde P (\La_0 \xleftrightarrow {occupied} \delta \La_1) \rightarrow 0$
as $\La_1 $ diverges. By stochastic domination, 
$\tilde P (\La_0 \xleftrightarrow {act} \delta \La_1) \rightarrow 0$
 so that \eqref{2.8} holds, and Corollaries \ref{2.81} and \ref{2.91}
 imply the first two statements.
 
 The last statement follows from
  standard arguments in percolation theory (see, e.g. \cite{B93}, Corollary 1), as 
  \eqref{3.1.1} implies stochastic domination of $\overline P_{\phi, \La, \1}$
by a probability $\tilde P$  in which occupied hyperbonds are independently selected with probability $p_b$.

\end{proof}

\section{Applications and related works}

\subsection{ Disagreement percolation and other uniqueness criteria}
A criterium for uniqueness of Gibbs distribution has been 
introduced by Dobrushin \cite{D68} (see also \cite{S79,  DS85}), closely related to
the sufficient condition of Corollary \ref{2.200}; in some cases our method performs
better (see below).

Two copies have been considered in the works on disagreement percolation in \cite{B93, BM94, BS94}.
In the last paper there is also a correlation inequality based on two copies (see Th. 2.4 in \cite{BS94},
but it involves site percolation and only holds for the hardcore model (see below).

It is, however, interesting to relate our work to disagreement percolation in more details. 
In its first version \cite{B93}, two
independent configurations were selected, and one would focus on percolation of regions of disagreement or, 
equivalently, non overlap. The main result of \cite{B93} is that absence of disagreement percolation for
two Gibbs measures implies that they coincide. As proven in the next lemma,
absence of disagreement percolation
implies that there is a vanishing connectivity by active RCR hyperbonds in the non overlap
region for any
overlap configuration, as there is no connectivity by non overlap regions in the first place, so 
Corollary \ref{2.91} implies uniqueness of the Gibbs phase: in this respect our results uniformly
improve upon the first version of disagreement percolation. 
\begin{lemma}
If for all pairs of Gibbs measures $\mu$ and $\mu'$  in $K_{\phi}$,
the probability $\mu \times \mu'((\omega, \omega')$:  there is an infinite path of disagreement$ ) =0$,
then   condition $(b)$ of Corollary \ref{2.81} holds for all sequences of b.c.'s
$\{ \1 \}_{\La}$, and hence the Gibbs distribution is unique.
\end{lemma}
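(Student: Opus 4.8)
The plan is to reduce condition $(b)$ of Corollary~\ref{2.81} to a statement about the geometry of the non overlap region alone, and then to deduce that statement from the hypothesis via weak convergence.

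\emph{First}, I record the observation that makes the reduction possible: in \emph{any} RCR of $\mu^{\sigma}_{\phi,\La,\1}$, every active hyperbond $b$ meets the non overlap region $K^c(\sigma)$. Indeed, $b$ active means $\eta_b\neq\Omega_b(\sigma)$, which forces $|\Omega_b(\sigma)|=\prod_{i\in b}|F(\sigma_i)|\geq 2$, hence $|F(\sigma_i)|\geq 2$ for some $i\in b$, i.e.\ $i\in K^c(\sigma)$. Consequently the event $\{\La_0\xleftrightarrow{act}\delta\La_1\}$ (for any RCR of $\mu^{\sigma}$) is contained in the event $D_\sigma$ that $\La_0$ and $\delta\La_1$ are joined by a chain of sequentially intersecting hyperbonds each meeting $K^c(\sigma)$ — and $D_\sigma$ depends on $\sigma$ alone, not on $\eta$ nor on the RCR. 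Thus $P^{\sigma}_{\phi,\La_3,\tilde\omega_{\La_3^c}}(\La_0\xleftrightarrow{act}\delta\La_1)=0$ whenever $D_\sigma$ fails, so
\[
p_{\epsilon}=\rho_{\phi,\La_3,\tilde\omega_{\La_3^c}}\!\left(P^{\sigma}_{\phi,\La_3,\tilde\omega_{\La_3^c}}(\La_0\xleftrightarrow{act}\delta\La_1)\leq\epsilon\right)\geq\rho_{\phi,\La_3,\tilde\omega_{\La_3^c}}(D_\sigma^c),
\]
and it is enough to make $\rho_{\phi,\La_3,\tilde\omega_{\La_3^c}}(D_\sigma)\leq\epsilon$ for all large $\La_3$ along the sequence.

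\emph{Second}, I translate $D_\sigma$ to the two--copy picture and make it local. Since $\rho_{\phi,\La_3,\tilde\omega_{\La_3^c}}$ is the law of $\sigma=\omega^{(1)}+\omega^{(2)}$ under two independent copies of $\mu_{\phi,\La_3,\tilde\omega_{\La_3^c}}$, and $i\in K^c(\sigma)$ exactly when $\sigma_i$ has more than one decomposition (for $F=\{-1,1\}$ this is precisely the disagreement set $\{i:\omega^{(1)}_i\neq\omega^{(2)}_i\}$; in general it contains it, and is the region against which the hypothesis must be read here), $D_\sigma$ says that $\La_0$ and $\delta\La_1$ are connected by a chain of hyperbonds lying in the non overlap region of $(\omega^{(1)},\omega^{(2)})$. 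This is not a local event, which is the one genuine obstacle: one cannot pass to the weak limit directly. I would get around it by fixing a ball $B_n$ around $\La_0$ and choosing $\La_1$ far enough out that $\mathrm{dist}(\La_0,\delta\La_1)>n$; then any such chain crosses $\partial B_n$, so $D_\sigma\subseteq E_n:=\{\La_0\text{ is connected to }\partial B_n\text{ inside the non overlap region}\}$, and $E_n$ \emph{is} a cylinder event, determined by $(\omega^{(1)},\omega^{(2)})$ in a bounded neighbourhood of $B_n$.

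\emph{Third}, I run the limit. Let $\mu\in K_{\phi}$ be the weak limit of $\mu_{\phi,\La,\1}$ along the given sequence. By the hypothesis (with $\mu'=\mu$), $\mu\times\mu$ gives zero probability to an infinite non overlap path, hence to $\bigcap_n E_n=\{\La_0\text{ lies in an infinite non overlap cluster}\}$; as $E_n\downarrow$, fix $n$ with $(\mu\times\mu)(E_n)<\epsilon/2$. Since $\mu_{\phi,\La_3,\tilde\omega_{\La_3^c}}\times\mu_{\phi,\La_3,\tilde\omega_{\La_3^c}}\Rightarrow\mu\times\mu$ and $E_n$ is a cylinder event, there is $\La_2$ in the sequence — taken large enough to contain $\La_1$ and the region on which $E_n$ depends — with $(\mu_{\phi,\La_3,\tilde\omega_{\La_3^c}}\times\mu_{\phi,\La_3,\tilde\omega_{\La_3^c}})(E_n)<\epsilon$ for all $\La_3\supseteq\La_2$ in the sequence. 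Then $\rho_{\phi,\La_3,\tilde\omega_{\La_3^c}}(D_\sigma)\leq\rho_{\phi,\La_3,\tilde\omega_{\La_3^c}}(E_n)<\epsilon$, so $p_{\epsilon}>1-\epsilon$, which is condition $(b)$. Because the hypothesis concerns \emph{every} pair in $K_{\phi}$, and every weak limit of finite--volume Gibbs measures lies in $K_{\phi}$, this runs for every sequence of boundary conditions, so Corollary~\ref{2.91} yields uniqueness. The main obstacle is precisely the non--locality just discussed; everything else — the identification of $\rho$ with the law of $\sigma$ under two copies, and the bookkeeping with $\La_0,\La_1,\La_2,\La_3,B_n$ — is routine.
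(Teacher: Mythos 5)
Your argument is correct (at the same level of rigor as the paper: you need, as you flag yourself, to read the hypothesis against the non overlap region rather than the literal disagreement set, which is exact for $\pm1$ spins and is precisely the identification the paper's own proof also makes when it asserts that $\La_0 \xleftrightarrow{act} \delta\La_1$ forces a path of disagreement in every compatible pair), but it follows a genuinely different route. The paper proves the contrapositive: assuming condition $(b)$ fails, it converts active connectivity into disagreement connectivity to get $(\mm\times\mm)(D(\La_0,\delta\La_1))\geq\epsilon^2$, then applies the DLR decomposition over the annulus $\La_3\setminus\La_1$ to exhibit two \emph{generally different} boundary configurations $\omega_{\La_3\setminus\La_1}$, $\omega'_{\La_3\setminus\La_1}$ whose conditional measures on $\La_1$ keep the disagreement connection probability bounded below, and finally uses compactness and a diagonal argument to produce a pair of possibly distinct Gibbs measures $\mu,\mu'\in K_\phi$ giving positive probability to an infinite disagreement path — which is exactly why the hypothesis is phrased for all pairs. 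You instead verify $(b)$ directly: active hyperbonds necessarily meet $K^c(\sigma)$, so $P^{\sigma}(\La_0\xleftrightarrow{act}\delta\La_1)$ vanishes off the $\sigma$-measurable event $D_\sigma$, which you localize through the cylinder events $E_n$ and control via weak convergence of $\mu_{\phi,\La_3,\1}\times\mu_{\phi,\La_3,\1}$ to $\mu\times\mu$, $\mu$ being the weak limit along the given sequence. What your route buys: it invokes the hypothesis only on the diagonal pair $(\mu,\mu)$ with $\mu$ a weak limit of the finite-volume measures, so it establishes a formally stronger statement while avoiding both the annulus decomposition and the diagonal extraction; the price is that you lean on the weak convergence built into Corollary \ref{2.81} (for a non-convergent sequence of boundary conditions one would first pass to convergent subsequences) and on the standard fact that such limits lie in $K_\phi$, which the paper also uses implicitly. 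The paper's pair formulation, on the other hand, mirrors van den Berg's disagreement percolation, where two distinct boundary conditions are compared, and does not require identifying which Gibbs measure the given boundary-condition sequence selects.
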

\begin{proof}
If  condition $(b)$ of Corollary \ref{2.81} does not hold then there exist a 
sequence of  b.c.'s $\{ \1 \}_{\La}$, a  set $\La_0 \subset V$, and $ \epsilon >0$ 
such that for all  $\La_1, \La_2$, $\La_1 \subseteq  \La_2$, 
 \begin{eqnarray} \label{2.8}
 p_{\epsilon} = \rho_{\phi,\La_3, \tilde \omega_{\Lambda_3^c}}( P_{\phi,\La_3, \tilde \omega_{\Lambda_3^c}}^{\sigma}(\La_0 \xleftrightarrow {act} \delta \La_1) \geq
 \epsilon ) \geq  \epsilon
\end{eqnarray}
for some $\La_3 \supseteq \La_2$. 

Notice that if, for some $\sigma$, $\La_0 \xleftrightarrow {act} \delta \La_1$
in some $\eta$, 
then necessarily there is a path of disagreement between $\La_0 $ and $ \delta \La_1$
in all the   configurations $(\omega_{\La_3},\omega'_{\La_3}) \in W_{\sigma}$
which are compatible with $\eta$. Let
$$
D(\La_0,\delta \La_1) = \{\text {there is a path of disagreement between } \La_0  \text{ and }  \delta \La_1 \},
$$
then 
\begin{eqnarray} \label{2.13}
 (\mu_{\phi, \La_3, \tilde \omega_{\La_3^c}} \times
\mu_{\phi, \La_3,  \tilde \omega_{\La_3^c}})
(D(\La_0,\delta \La_1))
&=&E_{\rho_{\phi,\La_3, \tilde \omega_{\Lambda_3^c}} }(\mu^{\sigma}_{\phi, \La_3, \tilde \omega_{\La_3^c}}
(D(\La_0,\delta \La_1)) \nonumber \\
&\geq & E_{\rho_{\phi,\La_3, \tilde \omega_{\Lambda_3^c}} }( P_{\La_3, \phi}^{\sigma}(\La_0 \xleftrightarrow {act} \delta \La_1
) )\nonumber \\
&\geq & \epsilon^2.
\end{eqnarray}

Next, consider  configurations $\omega_{\La_3 \setminus \La_1}$,
$\omega'_{\La_3 \setminus \La_1}$ and the two boundary conditions
$\omega_{\La_3 \setminus \La_1}  \tilde \omega_{\La_3^c} $ and
$\omega'_{\La_3 \setminus \La_1}  \tilde \omega_{\La_3^c}$ for Gibbs distributions in $\La_1$,
and consider 
$\mu_{\phi, \La_1, \omega_{\La_3 \setminus \La_1} \tilde \omega_{\La_3^c}} \times
\mu_{\phi, \La_1, \omega'_{\La_3 \setminus \La_1} \tilde \omega_{\La_3^c}}$. If
$$(\mu_{\phi, \La_1, \omega_{\La_3 \setminus \La_1}\tilde \omega_{\La_3^c}} \times
\mu_{\phi, \La_1, \omega_{\La_3 \setminus \La_1} \tilde \omega_{\La_3^c}})
(D(\La_0,\delta \La_1))
< \epsilon^2
$$
then  by the Gibbs formula
$$
\mu_{\phi, \La_3, \tilde \omega_{\La_3^c}}
= \int_{\Omega_{\La_3 \setminus \La_1}}   \mu_{\phi, \La_1, \omega_{\La_3 \setminus \La_1} \1}  \mu_{\phi, \La_3,\tilde \omega_{\La_3^c}}(d \omega_{\La_3 \setminus \La_1} )
$$ we have that 
$$
\mu_{\phi, \La_3,\tilde \omega_{\La_3^c}} \times
\mu_{\phi, \La_3, \tilde \omega_{\La_3^c}}
(D(\La_0,\delta \La_1) )
< \epsilon^2
$$
violating \eqref{2.13}.

Therefore, there are configurations $\omega_{\La_3 \setminus \La_1}$ ,
$\omega'_{\La_3 \setminus \La_1}$ such that 
$$(\mu_{\phi, \La_1, \omega_{\La_3 \setminus \La_1} \tilde \omega_{\La_3^c}} \times
\mu_{\phi, \La_1, \omega'_{\La_3 \setminus \La_1} \tilde \omega_{\La_3^c}})
(D(\La_0,\delta \La_1))
> \epsilon
$$ 
for some
$\La_3 \supseteq \La_2$ for each $\La_2$. By compactness and a diagonal argument, for a subsequence of 
$\La_2$'s the two sequences $\mu_{\phi, \La_1, \omega_{\La_3 \setminus \La_1} \tilde \omega_{\La_3^c}} $
and 
$
\mu_{\phi, \La_1, \omega_{\La_3 \setminus \La_1} \tilde \omega_{\La_3^c}}$ simultaneously converge in $K_\phi$ for all $\La_1$, 
so their product converges to some product of Gibbs measures $\mu \times \mu'$, for 
which 
$(\mu \times \mu')
(D(\La_0,\delta \La_1) )
> \epsilon^2 >0
$
for all $\La_1$. Hence, 
$$
\mu \times \mu'((\omega, \omega'): \text{ there is an infinite path of disagreement }) 
> \epsilon^2,
$$ contradicting the assumptions.

\end{proof}

It is interesting to notice that disagreement percolation is based on comparing
distinct boundary conditions, while our method uses the same boundary conditions
in the two copies.

The first version of disagreement percolation has been improved by using 
 optimal couplings with respect to variational distance, instead of the  independent coupling,
\cite{BM94}: there is no clear relation between the present RCR method and this improved
version of disagreement percolation, but the RCR method presented here has a more explicit geometric
interpretation, and in fact it also provides an explicit correlation bound.

\subsection{ Hard core models and complete antiferromagnets}
Hard core models are discussed in \cite{BM94}; they consist of a Gibbs measure
on $\{0,1\}^V$ given by 
$$
\mu_{a, \La, \1}(\omega_\La)= \frac{1}{Z} a^{\omega_i} 
\prod_{\langle i,j \rangle} \mathbb I_{\omega_i \omega_j=0}
\prod_{\langle i,j\rangle , i \in \La, j \not \in  \La} \mathbb I_{\omega_i \tilde \omega_j=0},
$$
i.e. $1$'s cannot be neighbor of each other.
Let us assume that $V$ is bipartite into $V_1, V_2$. Then any
for any overlap configuration $\sigma$ we must have that
for each connected component $C$ of $\La \setminus K$,
$\omega_i=1 \text{ for all }i \in V_1 \cap C, \text { and } 
\omega_i=0 \text{ for all }i \in V_2 \cap C$ or viceversa.
So, $P^{\sigma}$ is concentrated on two configurations,
and each bond of the graph is (at least part of) an active 
hyperbond. Hence, connectivity by active hyperbonds is equivalent in this 
case to connectivity by disagreement percolation in $\La \setminus K$;
our own criterium of Corollary \ref{2.91} is equivalent to 
that of \cite{BS94} (see their Proposition 3.3 and Theorem 3.4), and also
equivalent to the optimal coupling \cite{BM94} for this model.
In particular, they all  imply that there is uniqueness of the Gibbs phase if
$a < \frac{p_c}{1-p_c}$, where $p_c$ is the critical point for 
site percolation on the graph. This estimate  is better than the one
obtained with the Dobrushin Shlossman method, so also our current
one performs better than DS in this case.

\bigskip

For the complete antiferromagnet  on $V= \mathbb Z^d$, disagreement
percolation based on product coupling provided some improvement upon
previous estimates \cite{B93}. The use of optimal coupling has achieved a further
improvement \cite{BM94}, and so does our current integrated RCR method, which also
requires percolation active bonds in the disagreement or non overlap regions.
However, both optima coupling and integrated RCR do not change the 
zero temperature estimates, as the complete antiferromagnet tends to the 
hard core model as the temperature converges to zero, and 
there all estimates coincide, as discussed above.

\subsection{ Ferromagnetic Ising model on  the binary Cayley tree }

The ferromagnetic Ising model on the binary Cayley tree $(V,E)=T$
with couplings $J\geq 0$ and external field $h$,
treated here as an illustration of detailed calculations,  
has configurations $\{-1,1\}^V$ and 
$$
\mu_{(J,h), \La, \1}(\omega_\La)= \frac{1}{Z} e^{\sum_ {\langle i,j \rangle} J \omega_i \omega_j
+ \sum_{i \in \La} h \omega_i +
\sum_ {\langle i,j \rangle, i \in \La, j \not \in  \La} J\omega_i \tilde\omega_j} .
$$
A detailed description of the phase diagram is in \cite{Ge88}, Chapter 12.  
For $h(J)=max_{t \geq 0}(\log(\frac{\cosh{t+J}}{\cosh{t-J}})-t)$,
if $J \leq \log{3}/2$ and $h=h(J)=0$,  or $J > \log{3}/2$ and $|h|> h(J)$,
there is a unique Gibbs phase. We indicate by FK-RCR the Bernoulli RCR
which corresponds to the original FK representation.

Some of the Gibbs distributions $\mu$ on the Cayley tree are Markov chains,
in the sense that if $(i,j)$ is the oriented bond between two n.n. vertices $i, j \in T$,
and $\mathcal F_{(-\infty, i)}$ is the $\sigma$-algebra generated by the vertices
before $i$ in the order induced by $(i,j)$, then
$\mu(\omega_j | \mathcal F_{(-\infty, i)})= \mu(\omega_j | \omega_i)$ 
(see  \cite{Ge88}). 
\begin{corollary}
There is no percolation of active bonds in the FK-RCR of active bonds
in the non-overlap region of two independent copies of Markov chains
on the binary Cayley tree if and only if all Markov Chains are extremal.

\end{corollary}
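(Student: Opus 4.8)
The plan is to establish the equivalence one Markov chain at a time: for each tree-indexed Markov chain $\mu$ in $K_\phi$ I will show that $\mu$ is extremal precisely when the FK-RCR active bonds of the doubled non-overlap distribution of $\mu$ fail to percolate, the stated ``all chains'' version then following by quantifying both sides over the family of Markov chains. I will use throughout the characterisation of extremality as triviality of the tail $\sigma$-algebra, equivalently the vanishing, as $\La_1 \to V$, of $\sup_B |\mu(A\cap B)-\mu(A)\mu(B)|$ over events $B$ supported outside $\La_1$ and local $A$ (the same characterisation invoked in Corollary~\ref{2.81}). The one feature that makes the binary tree special is that it is loop-free, so that the FK-RCR of the symmetrised non-overlap measure is a genuine ferromagnetic random-cluster object, and the inequality of Theorem~\ref{2.6} can be saturated to an identity of the type \eqref{2.4.1}.

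Forward direction (no percolation $\Rightarrow$ extremal). Realise $\mu$ as a weak limit of finite-volume Gibbs measures $\mu_{\phi,\La,\1}$ along a sequence of boundary conditions $\1$ compatible with the chain's specification. By Lemma~\ref{2.100} each non-overlap distribution is a $\sigma$-symmetric $\mathcal B(\La)$-Gibbs measure carrying the FK-RCR, so the integrated probability $\overline P_{\phi,\La,\1}$ is defined. Absence of percolation of active bonds is exactly the statement that $\overline P_{\phi,\La_3,\1}(\La_0 \xleftrightarrow{act} \delta\La_1) \to 0$ as $\La_1$ grows, i.e. hypothesis (a) of Corollary~\ref{2.81}. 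That corollary yields triviality of the tail, hence extremality of $\mu$; crucially I invoke only the extremality criterion \ref{2.81} and never the uniqueness criterion \ref{2.91}, since distinct extremal Markov chains (e.g. the $\pm$ states) coexist. Ranging over all chains' boundary conditions gives this implication.

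Reverse direction (extremal $\Rightarrow$ no percolation), in contrapositive form. On the pure-disagreement sector $\sigma\equiv 0$ the symmetrised interaction $\phi'(\omega_b)=\phi(\omega_b)+\phi(-\omega_b)$ cancels the external field $h$ and doubles the coupling, so $\mu^{\sigma\equiv 0}$ is the zero-field ferromagnetic Ising measure of coupling $2J$, whose FK-RCR opens each bond with probability $1-e^{-4J}$ (the MNS blue-bond weight). Because $T$ is loop-free these active bonds are honest ferromagnetic FK clusters, compatible with one another, so the Example~\ref{Ex1} pathology --- connectivity strictly below correlation when active bonds are forced to disjoint supports --- cannot occur; the slack created in \eqref{mainformula1} by discarding the event $\{\omega\in A\cap B\}$ closes, and Theorem~\ref{2.6} holds with equality as in \eqref{2.4.1} for the root-to-boundary correlation. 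Thus if the active bonds percolate, $\overline P(\La_0 \xleftrightarrow{act}\delta\La_1)$ stays bounded below, forcing a non-vanishing correlation between the spin at the root and the field at infinity, i.e. a non-trivial tail and hence non-extremality of $\mu$.

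The genuine obstacle lies in the $\sigma$-integration of the reverse step. The clean identity is transparent on $\sigma\equiv 0$, but $\overline P$ averages over all overlap configurations against $\rho_{\phi,\La,\1}$, and sectors with sites pinned in the overlap (agreement) region both dilute the active-bond structure --- bonds incident only to overlap sites are automatically inactive --- and alter the effective couplings; indeed the $\sigma\equiv 0$ sector alone would percolate already for $\tanh J > 3-2\sqrt2$, far too soon, so the weighting by $\rho$ is essential. I expect to control this by a branching recursion on $T$ for the connection probability of the root to the depth-$n$ boundary under $\overline P$: the recursion is precisely the two-copy reconstruction recursion, whose survival threshold coincides with the extremality (Kesten--Stigum-type) threshold of the chains, namely $2(\tanh J)^2=1$ at $h=0$ and the corresponding asymmetric recursion for $h\neq 0$. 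Matching these two thresholds, rather than the one-sided bound of Theorem~\ref{2.6}, is the substance of the corollary; the forward implication is by contrast a direct reading of Corollary~\ref{2.81}.
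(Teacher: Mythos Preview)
Your forward direction is correct and is essentially what the paper does: absence of percolation feeds into Corollary~\ref{2.81} (via Corollary~\ref{2.200}) and gives extremality of each chain.

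The reverse direction, however, is not a proof. You explicitly write ``I expect to control this by a branching recursion'' and then state the answer; that is a plan, not an argument. The saturation-to-equality idea on the $\sigma\equiv 0$ sector is fine, but as you yourself observe, the $\rho$-average over all overlap configurations is where the content lies, and you do not carry out any computation there. Moreover the threshold you quote, $2(\tanh J)^2=1$, is the Kesten--Stigum reconstruction threshold and does \emph{not} coincide with the paper's percolation threshold: at $h=0$, $t=0$ the paper's formula gives $\overline p=\tanh(J)\tanh(4J)$, and $\overline p=1/2$ is a different curve from $\tanh J=1/\sqrt 2$.

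The paper's route to the reverse direction is quite different and entirely computational. Using the Markov property along oriented edges, it shows that under $\overline P$ the forward active bond, conditioned on connection from the past, is open with probability
\[
\overline p(J,h)=\bigl(a_{00}a_{11}-a_{01}a_{10}\bigr)\tanh(4J)=\det(A)\,\tanh(4J),
\]
where $A$ is the transition matrix of the chain indexed by $t$. Percolation on the binary tree is then decided by $\overline p\lessgtr 1/2$, and an algebraic check shows that $\overline p=1/2$ exactly when $t$ is the maximiser of $\log\bigl(\cosh(t+J)/\cosh(t-J)\bigr)-t$, i.e. on the known phase-boundary curve $h=h(J)$ from Georgii. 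So the matching of the two sides is obtained by comparing an explicit formula to the already-known extremality diagram, not by an abstract branching/saturation argument. If you want to complete your approach you would need to actually run the two-copy recursion and identify its threshold with $\det(A)\tanh(4J)=1/2$; absent that, the reverse implication is missing.
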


\begin{proof}

At given $J,h$, Markov chains on the binary Cayley tree are indexed by the solutions of 
$t=h+\log(\frac{\cosh{t+J}}{\cosh{t-J}})$ and have transition matrix
$$
A= [a_{k,\ell}]= \begin{bmatrix}
\frac{e^{J-t}}{2\cosh{(J-t})}&\frac{e^{t-J}}{2\cosh{(J-t)}} \\
\frac{e^{-J-t}}{2\cosh{(J+t)}}&\frac{e^{t+J}}{2\cosh{(J+t)}}
 \end{bmatrix} 
 $$
 for $k,\ell=0,1$
 \cite{Ge88} Prop. 12.24.

Our extremality conditions of Corollary \ref{2.81} imposed on the Markov
chains give the exact calculation of the phase boundary line (although in general they
 are only sufficient conditions for uniqueness). In fact, a simple calculation shows that
 the marginal of the FK-RCR 
 for the Ising model with $h=0$ field on a tree on the active bond variables
 is just  a Bernoulli distribution,
 in which a bond is present with probability
 $p_J=\frac{(1-e^{-2J})/2}{(1-e^{-2J})/2 + e^{-2J}}= \tanh{2J}$;
 the same independence appears for the RCR in the non overlap region,
 but now $J $ is doubled, so a bond is active with probability $p_J^{NO}=\tanh{4J}$. 
 If we condition on a region in the past being connected to the vertex
 $i$ in the RCR of the non overlap region, then
 $\omega_i \neq \omega'_i$ in the two copies, and the forward process
 is independent of the past, given this information.
 In order for the bond $(i,j)$ to be active in the RCR of the non overlap region,
 it is necessary that also $\omega_j\neq \omega'_j$, and that the 
 bond is active, which occurs with probability $p_J^{NO}$.
 
 Hence, for any $\La_0$ in the past of $(i,j)$
 $$
 \overline p(J,h)=\overline P(\eta_{i,j} \text{ is active } | \La_0 \xleftrightarrow {act} i)
 =( a_{0,0} a_{11}-a_{1,0} a_{0,1}) \tanh{4 J},
 $$
 where $a_{i,j} \in A$.
 A condition for extremality of all the Markov chains is obtained, 
 following
 Corollary \ref{2.200}, by a comparison with
  the critical 
 point for independent percolation on the binary tree:
 $\overline p(J,h)\leq 1/2 $.
 Some algebraic calculations show that this occurs exactly 
 when $t=\arg max_{t \geq 0}(\log(\frac{\cosh{t+J}}{\cosh{t-J}})-t)$,
 hence at the value of $t$ which corresponds to the 
 phase boundary line in $h$.
 \end{proof}

\subsection{ Spin Glasses} 
The Edwards Anderson Spin Glass model is defined as in \eqref{2.31}.
A RCR of (a single copy of) the EA Spin Glass model is discussed in  \cite{N94}, and consists of 
$b= \{i,j\}$ for n.n. $i, j$; $H=\{ \{-1,1\}, \Omega_b\}$; and 
$\nu(\eta_{ \{i,j\}} \text{ active }) = p =1- e^{-2 J_{i,j}}$. 
Non frustration conditions appear in expressing the marginal $P$ on active bonds.

As, for each fixed overlap, the non overlap configuration distribution
is also Gibbs of the same Spin Glass type, the representation above
is also valid for the non overlaps, with doubled coupling.
Additional representations for two quenched independent  copies of EA Spin Glasses
have been discussed in Section \ref{S2.3}; in particular, we have seen
that the MNS blue-red bond representation is a typed RCR.
We see now that this representation can also be expressed in terms of 
overlap configurations, and that 
 blue bonds are either in the overlap region, or in the
non overlap region, and red bonds are in between, separating the two.

Partially numerical arguments in \cite{MNS08} suggest
the formation of two large blue clusters (one in the overlap and one in the non overlap region)
and that  multiplicity of Gibbs state (with probability one with respect to the couplings)
should appear when the two blue clusters have different densities.

Some interpretation of this behavior may come from the following
consequence of our main result.
Consider  the joint distribution of the two typed (blue and red in the MNS model)
 RCR of the quenched EA Spin Glass
$$
Q_{\bf {J}, \phi,\La, \tilde \omega_{\La^c}}(\eta^{(\alpha)}_{\La},\eta^{(\beta)}_{\La},
\omega^{(1)}_{\La}, \omega^{(2)}_{\La} )
=\frac{1}{Z} \nu^{(\alpha)} (\eta^{(\alpha)}_{\La}) \nu^{(\beta)}_{\La} (\eta^{(\beta)}_{\La} )
\mathbb I_{(\omega^{(1)}_{\La}\tilde \omega_{\La^c} , \omega^{(2)}_{\La} \tilde \omega_{\La^c} ) 
\sim \eta^{(\alpha)}_{\La} \eta^{(\beta)}_{\La}} 
$$
and for a sequence $\tilde \omega_{\La^c}$ the
 (sub)sequential limits
$
Q_{\bf {J}, \phi}$ as $\Lambda \rightarrow \infty$.
Let $A=( \text{there is percolation of } \eta^{(\alpha)} \text{ active bonds }
\{i,j\}, \text{ s.t. } \omega_i^{(1)} =- \omega_i^{(2)},  \omega_j^{(1)} =- \omega_j^{(2)})$;
$A$ corresponds to the event that there is percolation of MNS-blue bonds
in the non overlap region of a pair of Spin Glass configurations.

\begin{theorem}
If, with probability one with respect to the coupling $\bf {J}$,
for all sequences $\{\tilde \omega_{\La^c} \}_{\La}$, there is no 
MNS-blue bonds
in the non overlap region, i.e.
$Q_{\bf {J}, \phi}(A)=0$,
then the Spin Glass Gibbs state is unique  for a set of  $\bf {J}$'s
of probability one.
\end{theorem}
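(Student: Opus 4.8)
The plan is to identify the MNS blue bonds confined to the non-overlap region with the active hyperbonds of a genuine $\sigma$-symmetric Bernoulli RCR of the non-overlap configuration distribution, and then to feed this through Corollaries~\ref{2.81} and~\ref{2.91}. Concretely, I would fix ${\bf J}$ in the probability-one set on which the hypothesis holds, and for each overlap configuration $\sigma$ recall, via Lemma~\ref{2.100} and the Spin Glass discussion above, that $\mu^{\sigma}_{\phi,\La,\1}$ is an EA Spin Glass on $\Omega(\sigma)$ with coupling doubled to $2J_{i,j}$. I equip it with Newman's RCR of \cite{N94} for this doubled coupling: $b=\{i,j\}$ is active with probability $1-e^{-4J_{i,j}}$, the active state retaining the two local configurations with $J_{i,j}\omega_i\omega_j>0$. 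On the non-overlap region $\sigma_b-\omega_b=-\omega_b$ and the active state is invariant under $\omega_b\mapsto-\omega_b$, so this RCR is $\sigma$-symmetric and Theorem~\ref{2.6}, hence Corollary~\ref{2.81}, applies to it. The crucial point, essentially contained in Section~\ref{S2.3}, is that conditionally on $\sigma$ the joint law of the non-overlap spins together with these active bonds equals the joint law, under $Q_{{\bf J},\phi,\La,\1}$, of the spins and the MNS blue bonds restricted to the non-overlap region: the probability $1-e^{-4J_{i,j}}$ is exactly $p_1^{(\alpha)}$; on $W_{\sigma}$ in the non-overlap region $\omega^{(2)}=-\omega^{(1)}$, so ``agreement with the coupling in both copies'' reduces to the single constraint defining the active state; and the MNS red bonds are deterministically absent strictly inside the non-overlap region. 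Averaging over $\sigma$ with weight $\rho_{\phi,\La,\1}$ then equates $\overline P_{\phi,\La,\1}(\La_0 \xleftrightarrow{act} \delta\La_1)$ with the $Q_{{\bf J},\phi,\La,\1}$-probability of a blue-bond path from $\La_0$ to $\delta\La_1$ that stays in the non-overlap region.

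Next I would deduce condition~$(b)$ of Corollary~\ref{2.81} for every boundary-condition sequence from the hypothesis $Q_{{\bf J},\phi}(A)=0$ for every such sequence, arguing by contradiction as in the disagreement-percolation lemma above — but more directly, since the two copies now carry identical boundary conditions. Were $(b)$ to fail there would be $\La_0$ and $\epsilon>0$ such that for all $\La_1\subseteq\La_2$ some $\La_3\supseteq\La_2$ satisfies $\rho_{\phi,\La_3,\tilde\omega_{\La_3^c}}(P^{\sigma}_{\phi,\La_3,\tilde\omega_{\La_3^c}}(\La_0 \xleftrightarrow{act} \delta\La_1)>\epsilon)>\epsilon$, whence $\overline P_{\phi,\La_3,\tilde\omega_{\La_3^c}}(\La_0 \xleftrightarrow{act} \delta\La_1)>\epsilon^2$ and, by the identification above, $Q_{{\bf J},\phi,\La_3,\tilde\omega_{\La_3^c}}$ gives probability $>\epsilon^2$ to a blue-bond connection from $\La_0$ to $\delta\La_1$ inside the non-overlap region. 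Letting $\La_2$ exhaust $V$, extracting a subsequence along which the corresponding $Q_{{\bf J},\phi,\La_3,\tilde\omega_{\La_3^c}}$ converge weakly to some $Q_{{\bf J},\phi}$, and observing that each such connection event is a cylinder event that may be arranged to decrease in $\La_1$, the limit retains probability $\geq\epsilon^2$ of each connection event, hence of their intersection, which is contained in $A$; therefore $Q_{{\bf J},\phi}(A)\geq\epsilon^2>0$, contradicting the hypothesis.

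Corollary~\ref{2.91} then yields uniqueness of the Gibbs state for this ${\bf J}$, and, the set of admissible ${\bf J}$ having probability one, the Spin Glass Gibbs state is unique for almost every coupling. I expect the main obstacle to be the limiting argument of the second step: one must keep careful track of which connection events stay cylinder events and form a decreasing family along the exhaustion, verify that the non-local percolation event $A$ is genuinely recovered as their limit, and confirm that the marginal of $Q_{{\bf J},\phi,\La_3,\tilde\omega_{\La_3^c}}$ on the blue bonds inside the non-overlap region is correctly reproduced by the integrated RCR built from Newman's representations of the $\mu^{\sigma}_{\phi,\La_3,\tilde\omega_{\La_3^c}}$. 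The structural identification of the first step is the conceptual heart, but, as noted, Section~\ref{S2.3} and the preceding Spin Glass discussion already supply essentially all of its ingredients.
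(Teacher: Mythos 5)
Your proposal is correct and takes essentially the same route as the paper: the identification of MNS blue bonds in the non-overlap region (present with probability $1-e^{-4J_{i,j}}$, red bonds excluded there) with the active bonds of a $\sigma$-symmetric Bernoulli RCR of the doubled-coupling non-overlap distribution is exactly what the paper establishes via its map $\psi$ and the cancellation of cross interactions between the overlap and non-overlap regions, after which Corollaries \ref{2.81} and \ref{2.91} yield uniqueness for almost every ${\bf J}$. The only minor difference is that you reach condition $(b)$ of Corollary \ref{2.81} by contradiction with a compactness/diagonal limiting argument (mirroring the disagreement-percolation lemma), whereas the paper derives condition $(a)$ directly from $Q_{{\bf J},\phi}(A)=0$ along the sequence defining the limit.
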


\begin{proof} Fix any finite set of vertices $\La_0$. 
If $Q_{\bf {J}, \phi}(A) >0$ then it is a standard procedure in percolation theory 
to select appropriate configurations around $\La_0$ such that percolation of
$\eta^{(1)}$ active bonds in the non overlap region occurs from 
$\La_0$ with positive probability. We thus assume that the $Q_{\bf {J}, \phi}$
probability of such percolation from $\La_0$ is zero for almost all $\bf {J}$'s.  Then,
for each such $\bf {J}$, and for each $\epsilon >0$, there are $\La_1, \La_2$ such
that 
$$
Q_{\bf {J}, \phi,\La_3, \tilde \omega_{\La_3^c}}(\La_0 
\xleftrightarrow {act} \delta \La_1) < \epsilon
$$
for all $\La_3 \supseteq \La_2$ in the sequence defining $Q_{\bf {J}, \phi}$.

Now, consider the map $\psi$ which transforms the variables 
representing MNS-blue bonds within $\La_1$  in the non overlap region of two configurations,
namely $(\eta^{(\alpha)}, \omega^{(1)}, \omega^{(2)})$ as defined in Section \ref{S2.3},
into active bonds in the Bernoulli RCR  of the non overlap configuration distribution, 
defined as follows.
Let $\{i,j\}=b \subseteq \La_1$; with $\eta^{(\alpha)}_{i,j}$ having values either $\Omega^{(1)}_{\{i,j\}}
\times \Omega^{(2)}_{\{i,j\}}$ or the set $\{(\omega^{(1)},\omega^{(1)}):
J_{i,j}\omega^{(1)}_i \omega^{(1)}_j=1, J_{i,j}\omega^{(2)}_i \omega^{(2)}_j=1\}$,
then 
$$(\psi(\eta^{(\alpha)}_{\La_1},\omega^{(1)}_{\La_1},\omega^{(2)}_{\La_1}) )_{i,j}
= 
\begin{cases}
\Omega_{i,j} & \mbox{if } \eta^{(\alpha)}_{i,j} = \Omega^{(1)}_{\{i,j\}}
\times \Omega^{(2)}_{\{i,j\}} \\
&\text{ and }\omega^{(1)}_i =-\omega^{(2)}_i,
\omega^{(1)}_j =-\omega^{(2)}_j.
\\
\{ \omega_{i,j} : J_{i,j} \omega_i \cdot \omega_j =1\}
& \mbox{ otherwise }
\end{cases} 
$$
%If $\Omega_{\La_1}^{diff}$ is the set of pairs of configurations
%$(\omega^{(1)}_{\La_1},\omega^{(2)}_{\La_1})$ which are 
%opposite in $\La_1$, i.e. $ \omega^{(1)}_i=-\omega^{(2)}_i$
%for all $i \in \La_1$,
%then $\psi$ is $1$-$1$ in $\Omega_{\La_1}^{diff}$; in fact; if 
%$\omega^{(1)}$ and $\omega^{(2)}$ are different in both $i $ and $j$,
%then either both $J_{i,j} \omega^{(\ell)}_i \cdot \omega^{(\ell)}_j =1$, 
%$\ell=1,2$, or they are both $-1$; so, $ \eta^{(1)}_{i,j}$
% has always the two choices of being
%active or not (instead of being forced to be inactive), and $\psi$ maps 
%$1-1$ to the active or inactive $\eta$'s in the RCR of the non overlap region.

We show here below that 
$$
Q_{\bf {J}, \phi,\La_3, \tilde \omega_{\La_3^c}}(\eta^{(\alpha)}_{\La_1},\omega^{(1)}_{\La_1},\omega^{(2)}_{\La_1}) 
=\overline P_{\bf {J}, \phi,\La_3, \tilde \omega_{\La_3^c}}(\psi(\eta^{(\alpha)}_{\La_1},\omega^{(1)}_{\La_1},\omega^{(2)}_{\La_1}) ).
$$
Then, 
\begin{eqnarray*}
\epsilon &\geq &Q_{\bf {J}, \phi,\La_3, \tilde \omega_{\La_3^c}}(\La_0 
\xleftrightarrow {act} \delta \La_1) \\
&=&
\overline P_{\bf {J}, \phi,\La_3, \tilde \omega_{\La_3^c}}(\psi(\La_0 
\xleftrightarrow {act} \delta \La_1))\\
&=& 
\overline P_{\bf {J}, \phi,\La_3, \tilde \omega_{\La_3^c}}(\La_0 
\xleftrightarrow {act} \delta \La_1)
\end{eqnarray*}
for all $\La_3 \supseteq \La_2$,
and hence by Part $(a) $ of Corollary  \eqref{2.81}, the Gibbs state is unique.

To conclude, we have the following. For given $\sigma$ let
$\La_{\sigma} = \{i: \sigma_1=0\}$ and 
$\setminus \La_{\sigma} = \mathcal B(\La)
\setminus \mathcal B(\La_{\sigma} )$

\begin{eqnarray*} 
&& Q_{\bf {J}, \phi,\La_3, \tilde \omega_{\La_3^c}}
(\overline \eta^{(\alpha)}_{\La_1},\omega^{(1)}_{\La_1},\omega^{(2)}_{\La_1})\\
&&\quad
\sum_{\eta^{(\alpha)}_{\La_3}, \eta^{(\beta)}_{\La_3}: \eta^{(\alpha)}_{\La_1}=\overline \eta^{(\alpha)}_{\La_1}} \\
&&\hskip 0.5cm
\sum_{\omega^{(1)}_{\La_3},\omega^{(2)}_{\La_3}:\omega^{(1)}_{\La_1}=-\omega^{(2)}_{\La_1}}
Q_{\bf {J}, \phi,\La_3, \tilde \omega_{\La_3^c}}
(\eta^{(\alpha)}_{\La_3}, \eta^{(\beta)}_{\La_3},\omega^{(1)}_{\La_3},\omega^{(2)}_{\La_3})\\
&=&
\sum_{\eta^{(\alpha)}_{\La_3}, \eta^{(\beta)}_{\La_3}: \eta^{(\alpha)}_{\La_1}=\overline \eta^{(\alpha)}_{\La_1}} \\
&&\hskip 0.5cm
\sum_{(\omega^{(1)}_{\La_3},\omega^{(2)}_{\La_3})\sim(\eta^{(\alpha)}_{\La_3}, \eta^{(\beta)}_{\La_3}):\omega^{(1)}_{\La_1}=-\omega^{(2)}_{\La_1}}
\frac{1}{Z} \nu^{(\alpha)} (\eta^{(\alpha)}_{\La_3}) \nu^{(\beta)}_{\La} (\eta^{(\beta)}_{\La_3} )\\
&=& \sum_{\sigma: \sigma_{\La_1} \equiv 0 }
\frac{1}{Z} \prod_{\{i,j\} \subseteq \La_{\sigma}} (1-e^{-4 J_{i,j}} )\mathbb I_{\overline \eta_{i,j} 
\text{(is active)}} +e^{-4 J_{i,j}} \mathbb I_{\overline \eta_{i,j} 
\text{(is not active)}}\\
&&\hskip 0.5cm
\sum_{\eta^{(\alpha)}_{\setminus \La_{\sigma}}, \eta^{(\beta)}_{\setminus \La_{\sigma}}}
\prod_{\{(i,j\} \not \subseteq \La_{\sigma}, \omega^{(1)}_i \cdot \omega^{(2)}_i=
 \omega^{(1)}_j \cdot \omega^{(2)}_j }
 (1-e^{-4 J_{i,j}} )\mathbb I_{\overline \eta_{i,j} 
\text{(is active)}} +e^{-4 J_{i,j}} \mathbb I_{\overline \eta_{i,j} 
\text{(is not active)}}\\
&&\hskip 0.9cm \prod_{\{(i,j\} \not \subseteq \La_{\sigma}, \omega^{(1)}_i \cdot \omega^{(2)}_i=
 -\omega^{(1)}_j \cdot \omega^{(2)}_j }
 (1-e^{-2 J_{i,j}} )\mathbb I_{\overline \eta_{i,j} 
\text{(is active)}} +e^{-2 J_{i,j}} \mathbb I_{\overline \eta_{i,j} 
\text{(is not active)}}\\
&&\hskip 1.2cm   \times
|\{(\omega^{(1)}_{\La_3},\omega^{(2)}_{\La_3}):
(\omega^{(1)}_{\La_3},\omega^{(2)}_{\La_3}) \sim 
\eta^{(1)}_{ \La_{\sigma}}, \eta^{(1)}_{\setminus \La_{\sigma}},
 \eta^{(2)}_{\setminus \La_{\sigma}} \}|\\
 &=&
  \sum_{\sigma: \sigma_{\La_1 \equiv 0} }
\frac{1}{Z} \prod_{\{i,j\} \subseteq \La_{\sigma}} (1-e^{-4 J_{i,j}} )\mathbb I_{\overline \eta_{i,j} 
\text{(is active)}} +e^{-4 J_{i,j}} \mathbb I_{\overline \eta_{i,j} 
\text{(is not active)}}\\
&&\hskip 0.6cm \times
|\{(\omega^{(1)}_{\La_3},\omega^{(2)}_{\La_3}):
(\omega^{(1)}_{\La_3},\omega^{(2)}_{\La_3}) \sim 
\eta^{(1)}_{ \La_{\sigma}}, \eta^{(1)}_{\setminus \La_{\sigma}},
 \eta^{(2)}_{\setminus \La_{\sigma}} \}|
 \\ &&\hskip 1.5cm 
\times \rho( (\omega^{(1)}_{\setminus \La_{\sigma}},\omega^{(2)}_{\setminus \La_{\sigma}})
: \omega^{(1)}_i = \omega^{(2)}_i \text{ for all } i \in \La_3 \setminus \La_{\sigma}) \\
&=& E_{\rho} (\nu^{\rho} (\psi(\overline \eta^{(1)}_{\La_1},\omega^{(1)}_{\La_1},\omega^{(2)}_{\La_1}) )
\\
&=& \overline P(\psi(\overline \eta^{(1)}_{\La_1},\omega^{(1)}_{\La_1},\omega^{(2)}_{\La_1});
\end{eqnarray*}
the third equality follows from the fact that the sum is independent of  
$(\omega^{(1)}_{\La_3},\omega^{(2)}_{\La_3})$ as the cross interactions between 
the non overlap configuration in $\La_{\sigma}$ and the 
overlap configuration in $\La_3 \setminus \La_{\sigma}$ is always zero,
since  for $i \in \La_{\sigma}$ and $j \not \in \La_{\sigma}$
we have 
$J_{i,j}( \omega^{(1)}_i \cdot \omega^{(1)}_j  + \omega^{(2)}_i  \cdot \omega^{(2)}_j)
=J_{i,j}( \omega^{(1)}_i \cdot \omega^{(1)}_j  - \omega^{(1)}_i  \cdot \omega^{(1)}_j)
= 0$.

This finishes the proof.

\end{proof}

The last theorem suggests that the observed unbalance
in blue cluster densities at the phase transition could be caused by the onset of percolation
of the blue cluster in the non overlap region.
Restricting to dimension $2$, it is
conceivable that planar geometric constraints  prevent the formation of 
a percolating blue cluster in the non overlap region and this could
lead to a proof of the absence of phase transition at any finite temperature
in the two-dimensional  EA Spin Glasses.

 %*****************************************
\small

\bigskip

Contact address:
NYU Abu Dhabi 
Saadiyat Island
P.O Box 129188
Abu Dhabi, UAE

email: ag189@nyu.edu


\begin{thebibliography}{99}

\bibitem[B93]{B93} van den Berg, J.: A Uniqueness Condition for Gibbs Measures,
with Application
to the 2-Dimensional Ising Antiferromagnet, Commun. Math. Phys. 152, 161-166 (1993).

\bibitem[BG13]{BG13}  J. van den Berg, A. Gandolfi: BK-type inequalities and generalized random-cluster representations. PROBABILITY THEORY AND RELATED FIELDS (2013), Vol. 157, Issue 1-2, pp 157-181

\bibitem[BM94]{BM94}  Van Den Berg, J.,  Maes, C.: "Disagreement Percolation in the Study of Markov Fields"
Ann. Probab.
Volume 22, Number 2 (1994), 749-763.

\bibitem[BS94]{BS94}   Berg, J., van den, Steif, J.E.: "Percolation and the hard-core lattice gas model". 
Stochastic Processes and their Applications
Volume 49, Issue 2, 179-197 (1994)



%\bibitem[CL06]{CL06} Chayes, L., Lei, H.K.: Random cluster models on the triangular lattice. J. Stat. Phys. 122, 647?670 (2006)

\bibitem[D68] {D68} Dobrushin, R.L.: The problem of uniqueness of a Gibbs random field and the problem of phase
transition. Funct. Anal. Appl. 2, 302-312 (1968)


\bibitem[DS85] {DS85} Dobrushin, R.L., Shlosman, S.B.: Constructive criterion for the uniqueness of a Gibbs field.
In: Fritz, J., Jaffe, A., Szasz, D. (eds.), Statistical mechanics and dynamical systems. Boston:
Birkhauser 1985, pp. 371-403.

\bibitem[FK72]{FK72} C.M. Fortuin and P.W. Kasteleyn, On the random-cluster model. I. Introduction and
relation to other models, {\it Physica} {\bf 57}, 536--564 (1972).


\bibitem[G18]{G18} Gandolfi, A.: "FKG (and other  inequalities) via (generalized) FK representation (and
iterated folding)". Preprint (2018)

 


\bibitem[GL16]{GL16} Gandolfi, A.,   Lenarda, P.:  "A note on Gibbs and Markov random fields with constraints and their moments", Math. and Mech. of Complex Systems, Vol. 4, No. 3-4, 407-422  (2016).

\bibitem[Ge88]{Ge88}  H.O. Georgii, Gibbs Measures and Phase Transitions, de Gruyter, Berlin (1988)

\bibitem[G06]{G06} Grimmett, G.R.: The Random-Cluster Model. Springer, Berlin (2006)


\bibitem[N94]{N94} C. Newman, "Disordered Ising systems and random cluster representations", Probability and Phase Transition (G. Grimmett, Ed.), Kluwer, Dordrecht, 1994, 247-260.

\bibitem[MNS08]{MNS08} Machta, J., Newman, C.M., Stein, D.L.:
The Percolation Signature of the Spin Glass Transition, J Stat Phys (2008) 130-113.

\bibitem[L74]{L74} L. Lebowitz, J. L., "GHS and other inequalities", Comm. Math. Phys. Volume 35, Number 2
(1974), 87-92.

\bibitem[NS13]{NS13}  Stein, D. L,  Newman, C. M.: Spin Glasses and Complexity
Princeton University Press (2013).


\bibitem[R00]{R00} D. Reimer, Proof of the Van den Berg-Kesten Conjecture, {\it Combinatorics, Probability and
Computing} {\bf 9}, 27--32 (2000).

\bibitem[Ru04]{Ru04} Ruelle, D.: 
Thermodynamic Formalism.  Cambridge University Press, Cambridge (2004)

\bibitem[S79]{S79} Simon, B: "A remark on Dobrushin's uniqueness theorem",
Comm. Math. Phys.
Volume 68, Number 2 (1979), 183-185.

\bibitem[TTC17]{TTC17} Tanaka, S., Tamura, R., Chakrabarti, B. K.:
Quantum Spin Glasses, Annealing and Computation. Cambridge, UK: Cambridge University Press (2017).

\end{thebibliography}
\end{document}